\newtheorem{theorem}{Theorem}[section]
\newtheorem{lemma}[theorem]{Lemma}
\newtheorem{proposition}[theorem]{Proposition}
\newtheorem{corollary}[theorem]{Corollary}
\theoremstyle{definition}
\newtheorem{remark}[theorem]{Remark}
\theoremstyle{remark}
\let\phi=\varphi
\def\epsilon{\varepsilon}
\def\0{\mathbf{0}}
\def\bgamma{\bar{\gamma}}
\newcommand{\comment}[1]{}
\numberwithin{equation}{section}
\let\epsilon=\varepsilon
\def\@maketitle{%
  \newpage
  \null
  \vskip 2em%
  \begin{center}%
  \let \footnote \thanks
    {\Large\bfseries \@title \par}%
    \vskip 1.5em%
    {\normalsize
      \lineskip .5em%
      \begin{tabular}[t]{c}%
        \@author
      \end{tabular}\par}%
    \vskip 1em%
    {\normalsize \@date}%
  \end{center}%
  \par
  \vskip 1.5em}
\begin{document}

\title{\sc \Large A metric version of Poincar\'e's theorem concerning biholomorphic inequivalence of domains}

\author{Bas Lemmens%
\thanks{Email: \texttt{B.Lemmens@kent.ac.uk}, The author gratefully acknowledges the support of the EPSRC (grant EP/R044228/1)}}
\affil{School of Mathematics, Statistics \& Actuarial Science,
University of Kent, \\Canterbury, CT2 7NX, United Kingdom}

\maketitle
\date{}

\begin{abstract}
We show that if $Y_j\subset \mathbb{C}^{n_j}$ is a bounded strongly convex domain with $C^3$-boundary for $j=1,\dots,q$, and $X_j\subset \mathbb{C}^{m_j}$ is a bounded convex domain for $j=1,\ldots,p$, then  the product domain $\prod_{j=1}^p X_j\subset \mathbb{C}^m$ cannot be isometrically embedded into  $\prod_{j=1}^q Y_j\subset \mathbb{C}^n$ under the Kobayashi distance, if $p>q$. This result generalises Poincar\'e's theorem which says that there is no biholomorphic map from the  polydisc onto the Euclidean ball in $\mathbb{C}^n$ for $n\geq 2$. 

The method of proof only relies on the metric geometry of the spaces and will be derived from a result  for products of proper geodesic metric spaces with the sup-metric. In fact, the main goal of the paper is to establish a general criterion, in terms of certain asymptotic geometric properties of the individual metric spaces, that yields an obstruction for the existence of an isometric embedding between product metric spaces.  
\end{abstract}

{\small {\bf Keywords:} Product metric spaces, Product domains, Kobayashi distance, isometric embeddings, metric compactification, Busemann points, detour distance}

{\small {\bf Subject Classification:} Primary 32F45; Secondary 51F99}


\section{Introduction}
Numerous theorems in several complex variables are instances of results in metric geometry. In this paper we shall see that a classic theorem due to Poincar\'e \cite{Po}, which  says that there is no biholomorphic map from  the polydisc $\Delta^n$  onto the  (open) Euclidean ball $B_n$ in $\mathbb{C}^n$ if $n\geq 2$, is a case in point.  In fact, it is known \cite{Ma,Zwo1,Zwo2} that there exists no surjective Kobayashi distance isometry of  $\Delta^n$ onto $B_n$ if $n\geq 2$. More generally one may wonder when it is possible to isometrically embed a product domain $\prod_{j=1}^p X_j\subset \mathbb{C}^m$ into another product domain $\prod_{j=1}^q Y_j\subset \mathbb{C}^n$ under the Kobayashi distance. 
In this paper we show, among other results,  the following theorem. 
\begin{theorem}\label{thm:poincare}
Suppose that $X_j\subset \mathbb{C}^{m_j}$ is a bounded convex domain for $j=1,\ldots,p$, and  $Y_j\subset \mathbb{C}^{n_j}$ is a bounded strongly convex domain with $C^3$-boundary for $j=1,\dots,q$.. If $p>q$, then there is no isometric embedding of  $\prod_{j=1}^p X_j$ into  $\prod_{j=1}^q Y_j$ under the Kobayashi distance.
\end{theorem}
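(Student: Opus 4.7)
The strategy is to reduce the theorem to a purely metric-geometric statement and then exploit the horofunction compactification together with the detour distance on the set of Busemann points. Since each $X_j$ is a bounded convex (hence taut, Kobayashi hyperbolic) domain, the product formula for the Kobayashi distance gives that the Kobayashi distance on $\prod_{j=1}^p X_j$ equals the sup-metric built from the individual Kobayashi distances $d_{X_j}$, and likewise for $\prod_{j=1}^q Y_j$. Thus the theorem reduces to: under suitable asymptotic conditions on the target factors, there is no isometric embedding of a sup-metric product of $p$ proper geodesic spaces into a sup-metric product of $q$ such spaces whenever $p>q$.

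The main tool is that an isometric embedding $f : X \to Y$ of proper geodesic metric spaces extends to a natural map on horofunction boundaries that sends Busemann points to Busemann points and is an isometry for the detour distance when restricted to each detour part (i.e.\ each equivalence class of Busemann points at finite detour distance). In particular, any invariant of the \emph{detour-part structure} is forced to be compatible with $f$, making this framework (due to Rieffel and Walsh) the natural setting for asymptotic rigidity results of this type.

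For the target, the key input is the description of the horoboundary of each strongly convex $C^3$-smooth domain: by work of Abate and Bracci--Patrizio, the horoboundary of $Y_j$ coincides with its Euclidean boundary, every horofunction is a Busemann point, and any two distinct Busemann points are at infinite detour distance; hence the detour parts of $Y_j$ are singletons. For a sup-metric product $Y = \prod_{j=1}^q Y_j$, one identifies Busemann points with geodesic rays $t \mapsto (\gamma_1(c_1 t), \ldots, \gamma_q(c_q t))$ with $c_j \in [0,1]$ and $\max_j c_j = 1$. The associated Busemann function is a maximum of suitably scaled Busemann functions of the factors, and the active set $S = \{ j : c_j > 0 \}$ is an invariant of the Busemann point. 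Two Busemann points of $Y$ with the same active set $S$ and the same factor Busemann data on $S$ lie in the same detour part, and this detour part is isometric to the sup-metric product $\prod_{j \notin S} Y_j$ of the inactive factors.

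The last and main step is to convert this into a numerical invariant that forces $p \leq q$. I would show that in a sup-metric product of $p$ proper geodesic spaces each of which admits a Busemann point (which every bounded convex domain does, via normal geodesic rays to boundary points), the length of the longest chain of properly nested non-trivial detour parts, obtained by iteratively shrinking the active set one index at a time, is exactly $p$; while for $Y = \prod_{j=1}^q Y_j$ (whose factor detour parts are singletons) it equals $q$. Since isometric embeddings preserve such nesting, this gives $p \leq q$. I expect the main obstacle to lie in this fourth step: precisely characterising when two Busemann points of a sup-metric product lie in the same detour part, and isolating an invariant of the resulting stratification that (a) is manifestly preserved by isometric embeddings and (b) equals the number of factors for a sufficiently broad class of products. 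This is essentially the general metric criterion announced in the abstract, from which Theorem~\ref{thm:poincare} will follow by inserting the Abate--Bracci--Patrizio description of the target factors.
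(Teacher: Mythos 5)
Your first three steps coincide with the paper's route: the product property of the Kobayashi distance reduces everything to sup-metric products of proper geodesic spaces; an isometric embedding extends to a map on Busemann points that is a detour-distance isometry on each part (Lemma \ref{partiso}); and the Abate horosphere theory shows each strongly convex $C^3$ factor has only Busemann points in its horoboundary, pairwise at infinite detour distance (Lemma \ref{detour2}). The genuine gap is in your fourth step, and it is twofold. First, your structural claim about the parts of a sup-metric product is false: the part of a Busemann point with active set $S$ is \emph{not} isometric to the product $\prod_{j\notin S} Y_j$ of the inactive factors. In an $\ell_\infty$-product the inactive coordinates are simply forgotten by the horofunction, since their contribution inside the maximum is eventually dominated --- already in $(\mathbb{R}^2,d_\infty)$ the sequences $(n,c)$ yield the same horofunction $x\mapsto -x_1$ for every value of $c$ --- so the inactive factors cannot parametrize the part. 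What does parametrize the part is the vector of offsets: by Theorem \ref{thm:parts}, two Busemann points $\max_{j\in J}(h_j-\alpha_j)$ and $\max_{j\in J'}(h'_j-\beta_j)$ are at finite detour distance precisely when $J=J'$ and $h_j=h'_j$ for all $j$, in which case $\delta=\|\alpha-\beta\|_{\mathrm{var}}$; hence each part of the target is isometric to $(\mathbb{R}^{J}/\mathrm{Sp}(\mathbf{1}),\|\cdot\|_{\mathrm{var}})$, a normed space of dimension $|J|-1$, with no trace of the inactive factors. (Your ray parametrization has the same defect: a coordinate moving at speed $0<c_j<1$ contributes $(c_j-1)t\to-\infty$ inside the maximum, so only the $c_j=1$ coordinates stay active, and nonzero offsets $\alpha_j$ require staggered almost-geodesics as constructed in Proposition \ref{prop1}(i), not constant-speed rays.)

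Second, the invariant you propose --- the longest chain of ``properly nested'' detour parts --- is ill-defined: parts are the equivalence classes of the relation $\delta<\infty$ on Busemann points, hence pairwise disjoint, so no part is nested in another; and the embedding machinery only gives you an isometry of each part into some part, with no control of any closure or stratification order on the horoboundary. The invariant that actually works is the \emph{dimension of a single part} together with Brouwer's invariance of domain: in the source, the Busemann point $h(x)=\max_{j}h_j(x_j)$ has a part containing an isometric copy of $(\mathbb{R}^{p}/\mathrm{Sp}(\mathbf{1}),\|\cdot\|_{\mathrm{var}})$ (Proposition \ref{prop1}(iii)), whereas every part of the target is isometric to $(\mathbb{R}^{n}/\mathrm{Sp}(\mathbf{1}),\|\cdot\|_{\mathrm{var}})$ with $n\leq q<p$ by Theorem \ref{thm:parts}; the induced part isometry would then embed a $(p-1)$-dimensional normed space continuously and injectively into an $(n-1)$-dimensional one, which is impossible. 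With your fourth step replaced by this dimension count --- which is exactly Theorem \ref{thm:main} --- the argument closes; as written, it does not.
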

Note that Poincar\'e's theorem is a special case where $p=n\geq 2$ and $q=1$, as the boundary of the Euclidean ball is smooth. The case where $\sum_j m_j = \sum_j n_j$ and the isometry is surjective was analysed by Zwonek \cite[Theorem 2.2.5]{Zwo1} who used different methods. 

A key property of the Kobayashi distance is the product property, see \cite[Theorem 3.1.9]{Ko}. Indeed, if $X_j\subset \mathbb{C}^{m_j}$ is a bounded convex domain for $j=1,\ldots,p$, then the Kobayashi distance, $k_X$, on the product domain $X:=\prod_{j=1}^p X_j$ satisfies
\[
k_X(w,z) =\max_{j=1,\ldots,p} k_{X_j}(w_j,z_j) \mbox{\quad  for all }w=(w_1,\ldots,w_p),z=(z_1,\ldots,z_p)\in X.
\]
In view of the product  property it natural to consider  product metric spaces with the sup-metric. Given metric spaces $(M_j,d_j)$, $j=1,\ldots,p$, the {\em product metric space} $(\prod_{j=1}^p M_j,d_\infty)$ is given by 
\[
 d_\infty(x,y) := \max_j d_j(x_j,y_j)\mbox{\quad for $ x=(x_1,\ldots,x_p),y=(y_1,\ldots,y_p)\in \prod_{j=1}^p M_j$.}
 \]
In this general context it is interesting to understand when one can isometrically embed a product metric space into another one.  The main goal of this paper is to establish a general criterion, in terms of certain asymptotic geometric properties of the individual metric spaces, that yields an obstruction for the existence of an isometric embedding between product metric spaces, and to show how this criterion can be used  to derive Theorem \ref{thm:poincare}. 

The key concepts from metric geometry involved are: the horofunction boundary of proper geodesic metric spaces,  almost geodesics, Busemann points, the detour distance, and the parts of the horofunction boundary, which will all be recalled in the next section. Our main result is the following.  
\begin{theorem}\label{thm:main} Suppose that $(M_j,d_j)$ is a proper geodesic metric space containing an almost geodesic sequence for $j=1,\ldots,p$, and $(N_j,\rho_j)$ is a proper geodesic metric space such that all its horofunctions are Busemann points, and $\delta(h_j,h_j')=\infty$ for all  $h_j\neq h_j'$ Busemann points  of $(N_j,\rho_j)$, for $j=1,\ldots,q$. If $p>q$, then there exists  no isometric embedding of $(\prod_{j=1}^p M_j,d_\infty)$ into $(\prod_{j=1}^qN_j, d_\infty)$. 
\end{theorem}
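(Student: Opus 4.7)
The approach is to use the horofunction compactification, comparing almost geodesic behaviour in the two products and then pigeonholing on the active ``coordinate directions'' in the target.

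First, I would fix a base point $b = (b_1,\dots,b_p)$ in the source and, for each $i=1,\dots,p$, use the given almost geodesic in $M_i$ (starting at $b_i$) to produce a ``pure'' almost geodesic sequence $\alpha^{(i)}(n)$ in $(\prod_j M_j, d_\infty)$ that advances only in coordinate $i$ and is constant equal to $b_l$ in every other coordinate $l\neq i$. A direct computation using $d_\infty(x,y) = \max_j d_j(x_j, y_j)$ shows that $(\alpha^{(i)}(n))_n$ is indeed an almost geodesic. Because isometric embeddings preserve the almost geodesic property, the image sequences $\tilde\alpha^{(i)}(n) := f(\alpha^{(i)}(n))$ are almost geodesic in $(\prod_l N_l, d_\infty)$. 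Together with properness of each $N_l$, this produces $p$ Busemann points $\eta_1,\dots,\eta_p$ in the horofunction boundary of the target product.

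Next I would invoke a structural description of the horofunction boundary of a product with the sup-metric: every Busemann point of $(\prod_l N_l, d_\infty)$ can be written as $\eta(z) = \max_{l\in S}(\zeta_l(z_l) - c_l)$ for some nonempty $S\subseteq\{1,\dots,q\}$, Busemann points $\zeta_l\in\partial N_l$ and constants $c_l\geq 0$ with $\min_{l\in S} c_l = 0$. Under the standing hypotheses (all horofunctions of $N_l$ are Busemann; distinct Busemann points are at infinite detour distance), a computation of the detour distance in the product reduces to: two product Busemann points $(S,\zeta,c)$ and $(S',\zeta',c')$ lie in the same part if and only if $S=S'$ and $\zeta_l=\zeta'_l$ for every $l\in S$. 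In particular, the Busemann data $(S_i, (\zeta_{i,l})_{l\in S_i})$ determines the part of $\eta_i$ uniquely.

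The crux is then to exploit the very rigid distance relations between the $\alpha^{(i)}$'s, namely that for $i\neq k$ and large $n$,
\[
 d_\infty(b, \alpha^{(i)}(n)) = d_\infty(b, \alpha^{(k)}(n)) = d_\infty(\alpha^{(i)}(n), \alpha^{(k)}(n)) = n + o(1),
\]
and transfer them via the isometry into $\prod_l N_l$. Working coordinate by coordinate in the target and using the almost geodesic characterisation together with the part-structure above, I would show that for each $i$ the active set $S_i$ is a singleton $\{\sigma(i)\}$ and that the resulting map $\sigma\colon \{1,\dots,p\}\to\{1,\dots,q\}$ is injective: if two distinct indices $i,k$ shared a coordinate $l\in S_i\cap S_k$, then in that factor $N_l$ one would obtain two almost geodesic sequences forming a degenerate triangle (isosceles with all three sides of length $n$) converging to Busemann points $\zeta_{i,l}$ and $\zeta_{k,l}$, and the hypothesis that distinct Busemann points of $N_l$ are at infinite detour distance would force $\zeta_{i,l}=\zeta_{k,l}$; repeating this analysis across all active coordinates and combining with the data uniqueness from the previous paragraph then yields the required singleton/injectivity statement. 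Once $\sigma$ is an injection into $\{1,\dots,q\}$, the hypothesis $p>q$ gives the contradiction.

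\textbf{Main obstacle.} The delicate step is the third paragraph: controlling the coordinates of the image sequences $\tilde\alpha^{(i)}(n)$ precisely enough to conclude that the active set $S_i$ is a singleton and that the assignment $i\mapsto S_i$ is injective. The isometric embedding only guarantees equality of sup-distances, not of coordinate-wise distances, so extracting the ``pure direction'' information in the target requires a careful argument that plays the almost geodesic condition in the product off against the rigidity supplied by the infinite-detour hypothesis on each factor $N_l$. The two preceding steps (building the almost geodesics and characterising product Busemann points) are preparatory; the isolation of $\sigma$ is where the full strength of the assumptions is used.
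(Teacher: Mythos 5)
Your first two steps are sound and match the paper's preparatory material: the pure sequences $\alpha^{(i)}$ are indeed almost geodesics in the product, their images under an isometric embedding remain almost geodesic, and every Busemann point of the target product has the form $\max_{l\in S}(\zeta_l(z_l)-c_l)$ with the part determined by $(S,(\zeta_l)_{l\in S})$ (this is the content of Theorem \ref{thm:2.2}, Proposition \ref{prop1} and Theorem \ref{thm:parts}). The crux step, however, is not merely delicate: it is false. There is no reason the image of a pure almost geodesic should have a singleton active set, nor that the active sets be disjoint. Concretely, take $M_1=M_2=N_1=N_2=N_3=\mathbb{R}$, which satisfies every hypothesis on the factors ($\mathbb{R}$ has exactly two horofunctions, both Busemann, at infinite detour distance), and the isometric embedding $\phi(x_1,x_2)=(x_1,x_2,\max(x_1,x_2))$ of $(\mathbb{R}^2,d_\infty)$ into $(\mathbb{R}^3,d_\infty)$; it is isometric because $z\mapsto\max(z_1,z_2)$ is $1$-Lipschitz for the sup-metric. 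The image of the pure sequence $(n,0)$ is $(n,0,n)$, which converges to the Busemann point $z\mapsto\max(-z_1,-z_3)$ with active set $\{1,3\}$, while the image of $(0,n)$ converges to $z\mapsto\max(-z_2,-z_3)$ with active set $\{2,3\}$: neither set is a singleton, they overlap, and in the shared coordinate $3$ the two limits carry the \emph{same} factor-Busemann point --- exactly the configuration your degenerate-triangle argument was meant to rule out, yet nothing is contradicted. (Here $p=2\leq q=3$, so this is consistent with the theorem; the point is that your singleton/injectivity claim cannot follow from the hypotheses, since this embedding satisfies all of them.) What survives of your analysis is only that the $p$ image points lie in $p$ distinct parts; but the target generally has infinitely many parts (and at least $3^q-1$ even when every $N_l=\mathbb{R}$), so no pigeonhole on parts or on active coordinates can close the argument.

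The missing idea is to use one Busemann point with \emph{full} active set rather than $p$ pure ones, and to count dimensions instead of coordinates. The paper takes $h(x)=\max_{j=1,\ldots,p}h_j(x_j)$; by Proposition \ref{prop1}(iii) its part $(\mathcal{P}(h),\delta)$ contains an isometric copy of $(\mathbb{R}^p/\mathrm{Sp}(\mathbf{1}),\|\cdot\|_{\mathrm{var}})$, a normed space of dimension $p-1$. Lemma \ref{partiso} shows that an isometric embedding of the products induces a detour-distance isometric embedding of $\mathcal{P}(h)$ into the part $\mathcal{P}(\Phi(h))$ of the target, and Theorem \ref{thm:parts} --- this is where the hypotheses that all horofunctions of each $N_j$ are Busemann and that distinct Busemann points lie at infinite detour distance are actually spent --- shows every part of the target is isometric to $(\mathbb{R}^n/\mathrm{Sp}(\mathbf{1}),\|\cdot\|_{\mathrm{var}})$ for some $n\leq q$. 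One then has an isometric (hence injective continuous) embedding of a $(p-1)$-dimensional normed space into an $(n-1)$-dimensional one with $n\leq q<p$, contradicting Brouwer's invariance of domain. Note that in the counterexample above this argument behaves correctly: the part of the image point simply sits inside a higher-dimensional part of the target, which is harmless when $q\geq p$.
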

The assumptions that each horofunction is a Busemann point and that any two distinct Busemann points lie at infinite detour distance from each other is a type of regularity condition on the asymptotic geometry of the space, which is satisfied by numerous metric spaces, such as finite dimensional normed spaces with smooth norms \cite{Wa2}, Hilbert geometries on bounded strictly convex domains with $C^1$-boundary \cite{Wa1}, and, as we shall see in  Lemma \ref{detour2}, Kobayashi metric spaces $(D,k_D)$, where $D\subset \mathbb{C}^n$ is a bounded strongly convex domain with $C^3$-boundary. 

It turns out that the parts of the horofunction boundary and the detour distance in product metric spaces have a special structure that is closely linked to a quotient space of $(\mathbb{R}^n,2\|\cdot\|_\infty)$, where $\|x\|_\infty=\max_j |x_j|$. More precisely, if we let $\mathrm{Sp}(\mathbf{1}) :=\{\lambda (1,\ldots,1)\in\mathbb{R}^n\colon \lambda\in\mathbb{R}\}$, then the quotient space $\mathbb{R}^n/\mathrm{Sp}(\mathbf{1})$ with respect to $2\|\cdot\|_\infty$ has the {\em variation norm} as the quotient norm, which is given by 
\begin{equation}\label{var}
\|\overline{x}\|_{\mathrm{var}} :=\max_jx_j + \max_j (-x_j)\mbox{\quad for }\overline{x} \in \mathbb{R}^n/\mathrm{Sp}(\mathbf{1}),
\end{equation}
see \cite[Section 4]{LRW}.  It is known, e.g.,  \cite[Proposition 2.2.4]{LNBook}, that $(\mathbb{R}^n/\mathrm{Sp}(\mathbf{1}), \|\cdot\|_{\mathrm{var}})$ is isometric to the Hilbert metric space on the open $(n-1)$-dimensional simplex. 

We show in Theorem \ref{thm:parts} that if, for $j=1,\ldots,q$, we have that $(N_j,\rho_j)$ is a proper geodesic metric space such that all its horofunctions are Busemann points, and $\delta(h_j,h_j')=\infty$ for all $h_j\neq h_j'$  Busemann points  of $(N_j,\rho_j)$,  then each part of  $(\prod_{j=1}^qN_j, d_\infty)$ is  isometric to $(\mathbb{R}^n/\mathrm{Sp}(\mathbf{1}), \|\cdot\|_{\mathrm{var}})$ for some $1\leq n\leq q$. 

The horofunctions for the product of two metric spaces have been considered by Walsh \cite[Section 8]{Wa4}. Some of our results are extensions of his work to arbitrary finite products, and the ideas of some of the proofs are quite similar. For the reader's convenience we give full proofs and provide comments on the relation with the work in \cite{Wa4} where relevant.

The work in this paper has links to work by Bracci and Gaussier \cite{BG} who studied the interaction between  topological properties and  the metric geometry of hyperbolic complex spaces. It is also worth mentioning that various other aspects of the metric geometry of product metric spaces have been studied in  context of Teichm\"uller space in  \cite{DF, Mi}.

\section{The metric compactification of product spaces}
 In our set-up we will follow the terminology in \cite{Ka}, which contains further references and background on the metric compactification.
 
Let $(M,d)$ be a metric space, and let $\mathbb{R}^M$ be the space of all real functions on $M$ equipped with the  topology of pointwise convergence. Fix  $b\in M$, which is called the {\em basepoint}. Let $\mathrm{Lip}^1_b(M)$ denote the set of all functions $h\in\mathbb{R}^M$ such that $h(b)=0$ and $h$ is 1-Lipschitz, i.e., $|h(x)-h(y)|\leq d(x,y)$ for all $x,y\in M$. Then $\mathrm{Lip}^1_b(M)$ is a closed subset of $\mathbb{R}^M$. Moreover, as 
\[
|h(x)|= |h(x)-h(b)|\leq d(x,b)
\]
for all $h\in \mathrm{Lip}^1_b(M)$ and $x\in M$, we get that $\mathrm{Lip}_b^1(M)\subseteq [-d(x,b),d(x,b)]^M$, which is compact by Tychonoff's theorem. 
Thus, $\mathrm{Lip}^1_b(M)$ is a compact subset of $\mathbb{R}^M$. 

Now for $y\in M$ consider the real valued function 
\[
h_{y}(z) := d(z,y)-d(b,y)\mbox{\quad with $z\in M$.}
\] 
Then $h_y(b)=0$ and $|h_y(z)-h_y(w)| = |d(z,y)-d(w,y)|\leq d(z,w)$. Thus, $h_y\in  \mathrm{Lip}_b^1(M)$ for all $y\in M$. The closure of $\{h_y\colon y\in M\}$ is called the {\em metric compactification of $M$}, and is denoted $\overline{M}^h$. The boundary $\partial \overline{M}^h:= \overline{M}^h\setminus \{h_y\colon y\in M\}$ is called the {\em horofunction boundary of $M$}, and its elements are called {\em horofunctions}. Given a horofunction $h$ and $r\in\mathbb{R}$ the set $\mathcal{H}(h,r):=\{x\in M\colon h(x)<r\}$ is a called a  {\em horoball}.

We will assume that the metric space $(M,d)$ is {\em proper}, meaning that all closed balls are compact. Such metric spaces are separable, since every compact metric space is separable. It is known that if $(M,d)$ is separable, then the topology of pointwise convergence on $\mathrm{Lip}^1_b(M)$ is metrizable, and hence each horofunction is the limit of a sequence of functions $(h_{y^n})$ with $y^n\in M$ for all $n\geq 1$.  In general, however, horofunctions are limits of nets. 

A curve $\gamma\colon I\to (M,d)$, where $I$ is a possibly unbounded interval in $\mathbb{R}$, is called a {\em geodesic path} if 
\[
d(\gamma(s),\gamma(t))=|s-t|\mbox{\quad for all }s,t\in I.
\]
The metric space $(M,d)$ is said to be a {\em geodesic space} if for each $x,y\in M$ there exists a geodesic path $\gamma\colon [a,b]\to M$ with $\gamma(a) =x$ and $\gamma(b) =y$.  A proof of the following  well known fact can be found in \cite[Lemma 2.1]{LLN}. 
\begin{lemma}\label{lem1}
If $(M,d)$ is a proper geodesic metric space, then $h\in\partial \overline{M}^h$ if and only if there exists a sequence $(y^n)$ in $M$ such that $h_{y^n}\to h$ and $d(y^n,b)\to\infty$ as $n\to\infty$.
\end{lemma}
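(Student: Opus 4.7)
The ``if'' direction has content only because we need to rule out $h$ being of the form $h_z$ for some $z\in M$; the ``only if'' direction is a soft separability argument. I would handle them in the order given.

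For the ``if'' direction, suppose $h_{y^n}\to h$ pointwise with $d(b,y^n)\to\infty$. My plan is to use the geodesic and properness hypotheses to build a geodesic ray along which $h$ is forced to be strictly decreasing, which is incompatible with $h$ being $h_z$ for any $z\in M$. Concretely, for each $n$ pick a unit-speed geodesic $\gamma_n\colon[0,L_n]\to M$ with $\gamma_n(0)=b$ and $\gamma_n(L_n)=y^n$, where $L_n=d(b,y^n)\to\infty$. Since each closed ball about $b$ is compact (by properness) and the curves $\gamma_n$ are $1$-Lipschitz, Arzel\`a--Ascoli applied on $[0,T]$ together with a diagonal argument yields a subsequence $\gamma_{n_k}$ converging uniformly on compact subsets of $[0,\infty)$ to a $1$-Lipschitz curve $\gamma\colon[0,\infty)\to M$; this $\gamma$ is a geodesic ray since $d(\gamma(s),\gamma(t))=\lim d(\gamma_{n_k}(s),\gamma_{n_k}(t))=|s-t|$.

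Next I would compute $h$ on this ray. For fixed $t\ge 0$, eventually $L_{n_k}\ge t$ and $\gamma_{n_k}(t)\to\gamma(t)$, so the triangle inequality gives $d(\gamma(t),y^{n_k}) = d(\gamma_{n_k}(t),y^{n_k}) + o(1) = (L_{n_k}-t)+o(1)$. Hence
\[
h(\gamma(t)) = \lim_{k\to\infty} h_{y^{n_k}}(\gamma(t)) = \lim_{k\to\infty}\bigl(d(\gamma(t),y^{n_k})-L_{n_k}\bigr) = -t.
\]
If now $h=h_z$ for some $z\in M$, then $-t=h_z(\gamma(t))=d(\gamma(t),z)-d(b,z)$, i.e.\ $d(\gamma(t),z)=d(b,z)-t$, which is negative once $t>d(b,z)$. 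This contradiction gives $h\in\partial\overline{M}^h$.

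For the ``only if'' direction, properness implies separability, so $\mathrm{Lip}^1_b(M)$ with pointwise convergence is metrizable, and every horofunction $h$ is the pointwise limit of some sequence $h_{z^n}$ with $z^n\in M$. I would then argue by contradiction: if $d(b,z^n)\not\to\infty$, pass to a bounded subsequence $(z^{n_k})$; by properness this has a further subsequence $z^{n_{k_j}}\to z\in M$, and since $y\mapsto h_y(x)$ is continuous for each fixed $x$, $h_{z^{n_{k_j}}}\to h_z$ pointwise. Uniqueness of limits forces $h=h_z$, contradicting $h\in\partial\overline{M}^h$. Thus $d(b,z^n)\to\infty$ and we may take $y^n:=z^n$.

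The only step that is more than formal is the Arzel\`a--Ascoli extraction of the geodesic ray; everything else is bookkeeping with the triangle inequality and properness. I would expect the write-up to be short once the ray is in hand.
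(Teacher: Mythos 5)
Your proof is correct. Note that the paper does not prove this lemma in-house: it cites \cite[Lemma 2.1]{LLN}. Your ``only if'' direction (metrizability via separability, then properness plus the $2$-Lipschitz continuity of $y\mapsto h_y$ to force $h=h_z$ if the distances stayed bounded) is the standard argument and matches the cited source. For the ``if'' direction, the cited proof gets the contradiction more cheaply: assuming $h=h_z$, one takes a point at a \emph{fixed} distance $r>0$ along a geodesic from $z$ to $y^n$, extracts a single convergent subsequence by properness, and uses that pointwise convergence of the equi-Lipschitz family $(h_{y^n})$ is uniform on compact sets to get $r-d(b,z)=-r-d(b,z)$, i.e.\ $r=0$. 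You instead run Arzel\`a--Ascoli with a diagonal argument to produce a full limiting geodesic ray $\gamma$ and show $h(\gamma(t))=-t$ for all $t\geq 0$; this is heavier machinery than needed for the lemma, but it is sound (the exchange of limits at $\gamma(t)$ is legitimate because you evaluate the pointwise limit at the fixed point $\gamma(t)$ and control $d(\gamma(t),y^{n_k})$ by the triangle inequality), and it buys strictly more: an explicit ray along which $h$ decreases with unit speed, which is in the same spirit as the induced rays the paper constructs later in Lemma \ref{lem3}. Either route rests on the same two pillars, properness and the geodesic hypothesis, and your write-up has no gaps.
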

It should be noted that in the previous lemma it is necessary to assume that the metric space is proper. Indeed, consider the star graph with centre vertex  $b$ and edges $E_n = \{b,v^n\}$ of length $n$ for $n\in\mathbb{N}$. Then the sequence $(v^n)$ in the resulting path metric space, with basepoint $b$, satisfies 
\[\lim_{n\to\infty} h_{v^n}(x) = \lim_{n\to\infty}d(x,v^n)-d(b,v^n) = d(x,b) = h_b(x)\] 
for all $x$, and hence does not yield a horofunction.  

A sequence $(y^n)$  in $(M,d)$ is called an {\em almost geodesic sequence} if $d(y^n,y^0)\to\infty$ as $n\to\infty$, and for each $\epsilon >0$ there exists $N\geq 0$ such that 
\[
d(y^m,y^k) +d(y^k,y^0) - d(y^m,y^0) <\epsilon\mbox{\quad for all }m\geq k\geq N.
\]
The notion of an almost geodesic sequence  goes back to  Rieffel \cite{Ri} and was further developed in \cite{AGW,LW, Wa3,Wa2}. 
In particular, any almost geodesic sequence yields a horofunction, see \cite[Lemma 4.5]{Ri}. 
\begin{lemma} Let $(M,d)$ be a proper geodesic metric space. If $(y^n)$ is an almost geodesic sequence in $M$, then 
\[
h(x) = \lim_{n\to\infty} d(x,y^n)-d(b,y^n)
\]
exists for all $x\in M$ and, moreover, $h\in\partial \overline{M}^h$.
\end{lemma}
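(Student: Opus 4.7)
The plan is to prove existence of the limit first and then identify the limit as a horofunction. For the existence, I would work with the auxiliary quantity $c_n(x) := d(x,y^n) - d(y^n,y^0)$ rather than the difference appearing in the statement, because the almost geodesic condition is phrased in terms of $y^0$. Once $\lim_n c_n(x)$ is shown to exist for every $x$, applying this to $x$ and to $b$ separately and subtracting gives
\[
\lim_{n\to\infty}\bigl(d(x,y^n)-d(b,y^n)\bigr)=\lim_{n\to\infty}c_n(x)-\lim_{n\to\infty}c_n(b),
\]
which is the desired limit $h(x)$.

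The core estimate is the following. Fix $x\in M$ and $\epsilon>0$, and choose $N$ as in the definition of almost geodesic. For $m\geq k\geq N$ the triangle inequality together with the almost geodesic inequality gives
\[
d(x,y^m)\leq d(x,y^k)+d(y^k,y^m)< d(x,y^k)+d(y^m,y^0)-d(y^k,y^0)+\epsilon,
\]
so $c_m(x)<c_k(x)+\epsilon$ for all $m\geq k\geq N$. Sending $m\to\infty$ yields $\limsup_m c_m(x)\leq c_k(x)+\epsilon$, and then sending $k\to\infty$ gives $\limsup_m c_m(x)\leq \liminf_k c_k(x)+\epsilon$. Since $\epsilon$ is arbitrary and $c_n(x)$ is bounded below by $-d(x,y^0)$ (triangle inequality), the sequence converges. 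Each $c_n(x)$ is finite, so the limit is a real number, and the subtraction argument above shows $h(x)$ is well-defined.

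For the second claim, each $h_{y^n}$ lies in the compact set $\mathrm{Lip}^1_b(M)$, and pointwise convergence $h_{y^n}\to h$ places $h$ in $\overline{M}^h$. To see that $h$ is in the horofunction boundary rather than of the form $h_y$, I invoke Lemma \ref{lem1}: it suffices to verify $d(y^n,b)\to\infty$. This is immediate from $d(y^n,y^0)\to\infty$ (part of the definition of almost geodesic) and the reverse triangle inequality $d(y^n,b)\geq d(y^n,y^0)-d(y^0,b)$. Hence $h\in\partial\overline{M}^h$.

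The main obstacle is the convergence step, and specifically the choice of the right auxiliary quantity $c_n(x)$: working directly with $d(x,y^n)-d(b,y^n)$ does not interact cleanly with the almost geodesic condition, which is anchored at $y^0$. Once one realises that the almost geodesic inequality is exactly what is needed to make the sequence $n\mapsto c_n(x)$ eventually $\epsilon$-decreasing, and that it is bounded below, the rest is routine.
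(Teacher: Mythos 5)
Your proof is correct and is essentially the standard argument behind this statement: the paper itself gives no proof but defers to \cite[Lemma 4.5]{Ri}, whose proof is exactly your observation that the almost geodesic condition makes $n\mapsto d(x,y^n)-d(y^n,y^0)$ eventually $\epsilon$-decreasing while the triangle inequality bounds it below, forcing convergence. Your use of Lemma~\ref{lem1} together with $d(y^n,b)\geq d(y^n,y^0)-d(y^0,b)\to\infty$ to place the limit in $\partial\overline{M}^h$ is also the intended route, so there is nothing to correct.
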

Given a proper geodesic metric space $(M,d)$, a horofunction $h\in\overline{M}^h$ is called a {\em Busemann point} if there exists an almost geodesic  sequence $(y^n)$ in $M$ such that $h(x) = \lim_{n\to\infty} d(x,y^n)- d(b,y^n)$ for all $x\in M$. We denote the collection of all Busemann points by $\mathcal{B}_M$.

It is known that a product metric space $(\prod_{j=1}^p M_j,d_\infty)$, where   
 \[
 d_\infty(x,y) = \max_j d_j(x_j,y_j)\mbox{\quad for $x=(x_1,\ldots,x_p),y=(y_1,\ldots,y_p)\in \prod_{j=1}^p M_j$,}
 \]
 is  a proper geodesic metric space, if each  $(M_j,d_j)$ is a  proper geodesic metric space, see for instance \cite[Proposition 2.6.6]{Pa}. 

 The horofunctions of a product  proper geodesic metric spaces have a special form, as the following theorem shows. This theorem is an extension of \cite[Proposition 8.1]{Wa4}, and the basic idea of the proof is the same. 
\begin{theorem}\label{thm:2.2} For $j=1,\ldots,p$ let $(M_j,d_j)$ be proper geodesic metric spaces. Suppose that $h$  is  a horofunction of $(\prod_{j=1}^p M_j,d_\infty)$ with basepoint $b=(b_1,\ldots,b_p)$. If $(y^n)$ is a sequence in $\prod_{j=1}^p M_j$ converging to $h$, then there exist $J\subseteq \{1,\ldots,p\}$ non-empty,  horofunctions $h_j$  in $\overline{M_j}^h$ with basepoint $b_j$ for $j\in J$, $\alpha\in\mathbb{R}^J$ with $\min_{j\in J} \alpha_j=0$, and a subsequence $(y^{n_k})$ such that  
\begin{enumerate}[(1)]
\item $d_\infty(b,y^{n_k}) -d_k(b_j,y^{m_k}_j)\to\alpha_j$ for all $j\in J$,
\item $d_\infty(b,y^{n_k}) -d_k(b_i,y^{m_k}_i)\to\infty$ for all $i\not\in J$,
\item $h_{y^{n_k}_j}\to h_j$ for all $j\in J$.
\end{enumerate}
Moreover, $h$ is of the form, 
\begin{equation}\label{horo}
h(x) = \max_{j\in J} h_j(x_j) -\alpha_j \mbox{\quad for $x=(x_1,\ldots,x_p)\in \prod_{j=1}^p M_j$.}
\end{equation}
\end{theorem}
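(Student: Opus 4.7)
My plan is to build the subsequence and the triple $(J, \alpha, (h_j))$ by a chain of compactness extractions, and then read off the formula \eqref{horo} from a single algebraic rearrangement of $d_\infty$.

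Because $h$ lies on the horofunction boundary, Lemma \ref{lem1} gives $d_\infty(b,y^n) \to \infty$. For each $j$, define the nonnegative gap
\[
\beta_j^n := d_\infty(b,y^n) - d_j(b_j, y^n_j).
\]
Since $\{1,\ldots,p\}$ is finite, pigeonhole produces an index $j^\star$ and a subsequence on which $\beta_{j^\star}^n = 0$ for every $n$. Extracting $p$ further nested subsequences, I arrange that every $\beta_j^n$ converges in $[0,\infty]$; setting $\alpha_j$ equal to the limit and $J := \{j : \alpha_j < \infty\}$, the choice of $j^\star$ forces $J \neq \emptyset$ and $\min_{j \in J}\alpha_j = 0$, which takes care of (1) and (2). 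For each $j \in J$ one then has $d_j(b_j, y^n_j) = d_\infty(b,y^n) - \beta_j^n \to \infty$, so by Lemma \ref{lem1} every subsequential limit of $(h_{y^n_j})$ inside the compact space $\mathrm{Lip}^1_{b_j}(M_j)$ is a horofunction of $(M_j,d_j)$. A finite diagonal extraction across $j \in J$ then yields (3).

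For the formula, the key identity, immediate from the definition of $d_\infty$, is
\[
d_\infty(x, y^{n_k}) - d_\infty(b,y^{n_k}) = \max_{1\leq j\leq p} \bigl[ h_{y^{n_k}_j}(x_j) - \beta_j^{n_k} \bigr].
\]
On the right, terms with $j \in J$ converge to $h_j(x_j) - \alpha_j$, while terms with $i \notin J$ tend to $-\infty$, because the $1$-Lipschitz bound $|h_{y^n_i}(x_i)| \leq d_i(x_i,b_i)$ is uniform in $n$ whereas $\beta_i^{n_k} \to \infty$. Since the maximum runs over finitely many indices, it commutes with the limit and yields $h(x) = \max_{j \in J}[h_j(x_j) - \alpha_j]$, which is \eqref{horo}.

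The only delicate points are the initial pigeonhole step that secures the normalisation $\min_{j \in J}\alpha_j = 0$, and the justification that indices outside $J$ drop out of the max in the limit; neither is hard, but both must be set up correctly before the algebraic identity above can be invoked. I expect no genuine obstacle beyond this organisation of the gaps $\beta_j^n$ as the controlling quantity.
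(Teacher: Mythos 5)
Your proposal is correct and follows essentially the same route as the paper: the same gap quantities $\beta_j^n = d_\infty(b,y^n)-d_j(b_j,y^n_j)$ (the paper's $\alpha_j^n$), the same pigeonhole step fixing an index where the gap vanishes along a subsequence to secure $\min_{j\in J}\alpha_j=0$, the same compactness extraction of limits $h_j$ in $\mathrm{Lip}^1_{b_j}(M_j)$ justified via Lemma \ref{lem1}, and the same algebraic identity for $d_\infty(x,y^{n_k})-d_\infty(b,y^{n_k})$ passed to the limit. If anything, you spell out more explicitly than the paper why the indices $i\notin J$ drop out of the maximum, which is a welcome clarification rather than a deviation.
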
 
\begin{proof}
Let $(y^n)$ be a sequence in $\prod_{j=1}^p M_j$ such that $(h_{y^n})$ converges to a horofunction $h$. 
So $h(x)=\lim_{n\to\infty} d_\infty(x,y^n)-d_\infty(b,y^n)$ for all $x\in \prod_{j=1}^p M_j$. As the product metric space is a proper geodesic metric space, it follows from Lemma \ref{lem1} 
that $d_\infty(b,y^n)\to \infty$ as $n\to\infty$. Write $y^n :=(y_1^n,\ldots,y^n_p)$ and let  $\alpha^n_j:= d_\infty(b,y^n)-d_j(b_j,y^n_j)\geq 0$ for all $j=1,\ldots,p$ and $n\geq 0$. 

We may assume, after taking a subsequence, that $h_{y^n_j}(\cdot) := d_j(\cdot, y^n_j)-d_j(b_j,y^n_j)$ converges to $h_j\in\overline{M_j}^h$ and $\alpha^n_j\to \alpha_j\in [0,\infty]$ for all $j\in\{1,\ldots,p\}$, and $\alpha^n_{j_0}=0$ for all $n\geq 0$ for some fixed $j_0\in\{1,\ldots,p\}$.  Let $J:=\{j\colon \alpha_j<\infty\}$ and note that $j_0\in J$.
So, 
\[
h(x) = \lim_{n\to\infty} d_\infty(x,y^n)-d_\infty(b,y^n) = \lim_{n\to\infty} \max_j (d_j(x_j,y_j^n)-d_j(b_j,y_j^n) - \alpha_j^n) = \max_{j\in J} h_j(x_j)-\alpha_j.
\]
To complete the proof note that $\alpha_j<\infty$ implies that $d_j(b_j,y^n_j)\to\infty$, and hence by Lemma \ref{lem1} we find that $h_j$ is a horofunction of $(M_j,d_j)$ for $j\in J$.
\end{proof}

The following notion will be useful in the sequel. A path $\gamma\colon [0,\infty)\to (M,d)$ is a called an \emph{almost geodesic ray} if $d(\gamma(t),\gamma(0))\to\infty$,  and for each $\epsilon>0$ there exists $T\geq 0$ such that 
\[
d(\gamma(t),\gamma(s)) +d(\gamma(s),\gamma(0)) - d(\gamma(t),\gamma(0)) <\epsilon\mbox{\quad for all }t\geq s\geq T.
\]
Let $(y^n)$ be an almost geodesic sequence in a geodesic metric space $(M,d)$, and assume that 
\begin{equation}\label{eq1}
d(y^n,y^0)<d(y^{n+1},y^0)\mbox{\quad for all }n\geq 0. 
\end{equation}
For simplicity we write $\Delta_n:= d(y^{n},y^0)$ and we let  $\gamma_n\colon [0,d(y^{n+1},y^n)]\to (M,d)$ be a geodesic path connecting $y^n$ and $y^{n+1}$, i.e., $\gamma_n(0) = y^n$ and $\gamma_n(d(y^{n+1},y^n)) = y^{n+1}$. for all $n\geq 0$. 

We write $I_n:=[\Delta_n,\Delta_{n+1}]$ and let  $\bar{\gamma}_n\colon I_n\to (M,d)$ be the affine reparametrisation   of $\gamma_n$ given by 
\[
\bar{\gamma}_n(t) := \gamma_n\left(\frac{d(y^{n+1},y^n)}{ \Delta_{n+1} - \Delta_n}(t - \Delta_n)\right)\mbox{\quad for all } t \in  I_n.
\]
We call the path $\bar{\gamma}\colon [0,\infty)\to (M,d)$ given by 
\[
\bar{\gamma}(t) :=\bar{\gamma}_n(t)\mbox{\quad  for }t\in I_n
\]
a {\em ray induced by} $(y^n)$. Note that $\bar{\gamma}$ is well defined for all $t\geq 0$ by (\ref{eq1}). 

\begin{lemma}\label{lem3}
If $(y^n)$ is an almost geodesic sequence in a geodesic metric space $(M,d)$ converging to a horofunction $h$ and satisfying (\ref{eq1}), then each ray, $\bar{\gamma}$, induced by $(y^n)$ satisfies:
\begin{enumerate}[(i)]
\item $\bar{\gamma}$ is an almost geodesic ray and  $h_{\bar{\gamma}(t)}\to h$ as $t\to\infty$, 
\item the map $t\mapsto d(\bar{\gamma}(t),\bar{\gamma}(0))$ is continuous on $[0,\infty)$.
\end{enumerate}
\end{lemma}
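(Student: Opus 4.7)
The plan is to prove (ii) first, since it reduces to continuity of $\bar\gamma$, and then attack (i) in two steps: first exploit the almost geodesic defect of $(y^n)$ to control the affine speedups $c_n := d(y^{n+1},y^n)/(\Delta_{n+1}-\Delta_n) \geq 1$ and establish the almost geodesic ray inequality, then use a Rieffel-style monotonicity argument to lift convergence of $h_{y^n}\to h$ from the subsequence $(\Delta_n)$ to the continuous parameter $t$. For (ii), each $\bar\gamma_n$ is continuous on $I_n$ as an affine reparametrisation of a geodesic, and the two pieces agree at endpoints via $\bar\gamma_{n-1}(\Delta_n)=y^n=\bar\gamma_n(\Delta_n)$, so $\bar\gamma\colon [0,\infty)\to M$ is globally continuous; composing with $d(\cdot,y^0)$ gives continuity of $t\mapsto d(\bar\gamma(t),\bar\gamma(0))$.

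For the ray property in (i), fix $\epsilon>0$ and take $N$ from the almost geodesic condition. Applying it at $(m,k)=(n+1,n)$ yields $d(y^{n+1},y^n)-(\Delta_{n+1}-\Delta_n)<\epsilon$, which after dividing by $\Delta_{n+1}-\Delta_n$ is the key bound $(c_n-1)(\Delta_{n+1}-\Delta_n)<\epsilon$ for $n\geq N$. This bounds the extra arc length added by the rescaling on each $I_n$ by $\epsilon$, and in particular forces $|d(\bar\gamma(t),y^0)-t|<\epsilon$ for $t\in I_n$ with $n\geq N$. For $\Delta_N\leq s\leq t$ with $s\in I_m$ and $t\in I_n$, I route through $y^{m+1}$ and $y^n$:
\[
d(\bar\gamma(t),\bar\gamma(s)) \leq c_m(\Delta_{m+1}-s) + d(y^{m+1},y^n) + c_n(t-\Delta_n),
\]
and the two $c$-estimates together with $d(y^{m+1},y^n)<\Delta_n-\Delta_{m+1}+\epsilon$ collapse this to $(t-s)+O(\epsilon)$ (the case $m=n$ is immediate as both points lie on a single $\gamma_n$). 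Combining with the estimate of $d(\bar\gamma(\cdot),y^0)$ delivers the almost geodesic ray inequality; the divergence $d(\bar\gamma(t),y^0)\to\infty$ follows at once since $d(\bar\gamma(\Delta_n),y^0)=\Delta_n$.

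For $h_{\bar\gamma(t)}\to h$, fix $x\in M$ and set $\phi_x(t):=d(x,\bar\gamma(t))-d(y^0,\bar\gamma(t))$. Combining the almost geodesic ray inequality with $d(x,\bar\gamma(t))-d(x,\bar\gamma(s))\leq d(\bar\gamma(s),\bar\gamma(t))$ yields the one-sided bound $\phi_x(t)-\phi_x(s)\leq\epsilon$ for $t\geq s$ past some threshold. Since $\phi_x$ is bounded in $[-d(x,y^0),d(x,y^0)]$, this \emph{eventually monotone up to $\epsilon$} property forces $\lim_{t\to\infty}\phi_x(t)$ to exist, and since $\phi_x(\Delta_n)=h_{y^n}(x)\to h(x)$ by hypothesis, the limit must coincide with $h(x)$. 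The main technical obstacle is that the factors $c_n$ may be unbounded if the increments $\Delta_{n+1}-\Delta_n$ shrink, but every estimate only involves the product $(c_n-1)(\Delta_{n+1}-\Delta_n)$, which is precisely what the almost geodesic defect controls; once this reformulation is in place, the rest is bookkeeping.
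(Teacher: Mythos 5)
Your proof is correct, and its overall skeleton matches the paper's: both arguments reduce everything to the almost-geodesic defect of $(y^n)$ via triangle-inequality routing through the vertices $y^{m+1}$ and $y^n$, treat the same-interval case $s,t\in I_n$ separately, identify the limiting horofunction through $\bar{\gamma}(\Delta_n)=y^n$, and prove (ii) by piecewise continuity of the affine reparametrisations plus matching at the endpoints $\Delta_n$. The implementations differ in two respects, though. First, the paper never introduces the speed factors $c_n$: its key estimate (\ref{eq:3.1}) bounds $d(\bar{\gamma}(t),y^n)+d(y^n,y^0)-d(\bar{\gamma}(t),y^0)<\epsilon$ for $t\in I_n$ in a single step, by inserting and removing $d(y^{n+1},\bar{\gamma}(t))$ and using that $\bar{\gamma}(t)$ lies on the geodesic from $y^n$ to $y^{n+1}$, so that $d(y^{n+1},\bar{\gamma}(t))+d(\bar{\gamma}(t),y^n)=d(y^{n+1},y^n)$; your reformulation via $(c_n-1)(\Delta_{n+1}-\Delta_n)<\epsilon$ is equivalent but more quantitative, and yields as a by-product the near-unit-speed statement $|d(\bar{\gamma}(t),y^0)-t|<\epsilon$, which the paper does not record but which also hands you the divergence $d(\bar{\gamma}(t),y^0)\to\infty$ directly. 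Second, for $h_{\bar{\gamma}(t)}\to h$ the paper simply invokes Rieffel's lemma (\cite[Lemma 4.5]{Ri}) that an almost geodesic ray converges to a horofunction; your bounded, eventually-monotone-up-to-$\epsilon$ argument for $\phi_x$ is precisely the standard proof of that lemma, so your write-up is self-contained where the paper outsources. One cosmetic point: the identity $\phi_x(\Delta_n)=h_{y^n}(x)$ tacitly takes $y^0$ as basepoint, whereas the lemma's $h$ is normalised at the ambient basepoint $b$; since $h_{\bar{\gamma}(t)}(x)=\phi_x(t)-\phi_b(t)$ and both limits exist, the two normalisations agree (using $h(b)=0$), but a line noting this would tighten the bookkeeping.
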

\begin{proof}
We first show that for each $\epsilon>0$ there exists $T\geq 0$ such that 
\begin{equation}\label{eq:3.1}
d(\bar{\gamma}(t),y^n) +d(y^n,y^0) - d(\bar{\gamma}(t),y^0)<\epsilon \mbox{\quad for all $t\geq T$ and $n\geq 0$ with $t\in I_n$}.
\end{equation}
To get this inequality just note that there exists $N\geq 0$ such  that 
\begin{eqnarray*}
d(\bar{\gamma}(t),y^n) +d(y^n,y^0) - d(\bar{\gamma}(t),y^0)  & = & d(y^{n+1}, \bar{\gamma}(t))+d(\bar{\gamma}(t),y^n) +d(y^n,y^0) \\
  & & - d(\bar{\gamma}(t),y^0)-d(y^{n+1}, \bar{\gamma}(t))\\
 & \leq & d(y^{n+1},y^n) +d(y^n,y^0) - d(y^{n+1},y^0)<\epsilon, 
 \end{eqnarray*}
 for all $t\in I_n$ and $n\geq N$, as $(y^n)$ is an almost geodesic sequence. So to get (\ref{eq:3.1}) we can take $T=\Delta_N$. 

To prove that  $\bar{\gamma}$ is an almost geodesic ray, we need to show that for each $\epsilon >0$ there exists $S\geq 0$ such that 
\[
d(\bar{\gamma}(t),\bar{\gamma}(s)) +d(\bar{\gamma}(s),\bar{\gamma}(0)) - d(\bar{\gamma}(t),\bar{\gamma}(0)) <\epsilon\mbox{\quad for all }t\geq s\geq S.
\]
 Suppose that $t>s$ are such that $t\in I_n$ and $s\in I_k$ with $n>k$. Then by using (\ref{eq:3.1}) we know that for all $n$ and $k$ large, 
  \begin{eqnarray*}
  d(\bgamma(t),\bgamma(s)) +d(\bgamma(s),\bgamma(0)) - d(\bgamma(t),\bgamma(0)) & \leq & d(\bgamma(t),\bgamma(s)) +d(\bgamma(s),y^k)+ d(y^k,y^0)\\ 
   &  &\qquad  - d(\bgamma(t),y^0)\\ 
   & \leq & d(\bgamma(t),y^n)+d(y^n,\bgamma(s)) +d(\bgamma(s),y^k)\\ 
   &  & \qquad +  d(y^k,y^0) - d(\bgamma(t),y^0)\\ 
 & < & - d(y^n,y^0)+d(y^n,\bgamma(s)) +d(\bgamma(s),y^k)\\ 
   &  & \qquad+ d(y^k,y^0) +\epsilon\\ 
 & \leq & - d(y^n,y^0)+d(y^n,y^{k+1})+d(y^{k+1},\bgamma(s))\\
  &  & \qquad +d(\bgamma(s),y^k) + d(y^k,y^0) +\epsilon\\ 
&= & - d(y^n,y^0)+d(y^n,y^{k+1})+d(y^{k+1},y^k)\\ 
   &  & \qquad + d(y^k,y^0) +\epsilon\\ 
&< & - d(y^n,y^0)+d(y^n,y^{k+1})\\ 
   &  & \qquad +  d(y^{k+1},y^0) +2\epsilon <3\epsilon.\\ 
 \end{eqnarray*}
 
 Finally suppose that $t\geq s$ are such that $t,s\in I_n$. Then for all $n\geq 0$ large we have that 
   \begin{eqnarray*}
  d(\bgamma(t),\bgamma(s)) +d(\bgamma(s),\bgamma(0)) - d(\bgamma(t),\bgamma(0)) & = & d(\bgamma(t),y^n) -d(y^n,\bgamma(s))+d(\bgamma(s),\bgamma(0)) \\
   & & \qquad  - d(\bgamma(t),\bgamma(0)) \\
   &  \leq & d(\bgamma(t),y^n)+ d(y^n,y^0) - d(\bgamma(t),y^0)< \epsilon.
    \end{eqnarray*}

As $\bar{\gamma}$ is an almost geodesic ray, we know by  \cite[Lemma 4.5]{Ri} that $h_{\bar{\gamma}}(t)\to h'$, where $h'$ is a horofunction. As $\bar{\gamma}(\Delta_n)=y^n$ for all $n$, we get that $h'=h$. 

To prove the second assertion we note that the affine map 
\[
t\mapsto \frac{d(y^{n+1},y^n)}{ \Delta_{n+1} - \Delta_n}(t - \Delta_n)
\]
 is a continuous map from $I_n$ onto $[0,d(y^{n_1},y^n)]$, and the map $\gamma_n\colon [0,d(y^{n+1},y^n)]\to (M,d)$ is continuous, as $\gamma_n$ is a geodesic. Thus, the map $t\mapsto d(\bar{\gamma}(t),\bar{\gamma}(0))$ is continuous on the interior of the interval $I_n$ for all $n\geq 0$. To get continuity at the endpoints we simply note that for all $n\geq 0$,
 \[
 \lim_{t\to \Delta_n^-} d(\bgamma(t), \bgamma(0)) = d(y^n,\bgamma(0)) = \lim_{t\to \Delta_n^+} d(\bgamma(t), \bgamma(0)),
 \]
 which completes the proof.
\end{proof}
\begin{lemma}\label{lem4}
If $(y^n)$ is an almost geodesic sequence in a geodesic metric space $(M,d)$ satisfying (\ref{eq1}) and $\bar{\gamma}$ is a ray induced by $(y^n)$, then for each sequence $(\beta^n)$ in $[0,\infty)$ with $\beta^{n+1}>\beta^n$ for all $n\geq 0$ there exists sequence $(t^n)$ in $[0,\infty)$ with $t^{n+1}>t^n$ for all $n\geq 0$ such that  $d(\bgamma(t^n),\bgamma(0)) = \beta^n$ for all $n\geq 0$.
\end{lemma}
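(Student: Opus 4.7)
The plan is to reduce the claim to a direct application of the intermediate value theorem, with all the substantive content already provided by Lemma \ref{lem3}. Define $f\colon[0,\infty)\to[0,\infty)$ by $f(t):=d(\bgamma(t),\bgamma(0))$. By Lemma \ref{lem3}(ii), $f$ is continuous on $[0,\infty)$. By Lemma \ref{lem3}(i), the path $\bgamma$ is an almost geodesic ray, which by the very definition of such a ray forces $f(t)\to\infty$ as $t\to\infty$. Finally, from the construction of $\bgamma$ one reads off $\bgamma(0)=\gamma_0(0)=y^0$, so $f(0)=0$.

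Given these three facts, I would build the sequence $(t^n)$ by induction. For $n=0$, since $f$ is continuous with $f(0)=0$ and $f(t)\to\infty$, and $\beta^0\in[0,\infty)$, the intermediate value theorem produces some $t^0\in[0,\infty)$ with $f(t^0)=\beta^0$. For the inductive step, suppose that $t^0<t^1<\cdots<t^{n-1}$ have already been chosen with $f(t^k)=\beta^k$ for $0\le k\le n-1$. Then $f(t^{n-1})=\beta^{n-1}<\beta^n$, and since $f(t)\to\infty$ there exists $T>t^{n-1}$ with $f(T)>\beta^n$. Applying IVT to $f$ on $[t^{n-1},T]$ yields some $t^n\in(t^{n-1},T)$ with $f(t^n)=\beta^n$, and the strict inequality $t^n>t^{n-1}$ is automatic because $f(t^{n-1})=\beta^{n-1}\ne\beta^n$.

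I do not foresee any genuine obstacle. The only point that could plausibly cause trouble is the continuity of $f$ at the reparametrisation knots $t=\Delta_n$, where the affine speeds $d(y^{n+1},y^n)/(\Delta_{n+1}-\Delta_n)$ on adjacent pieces need not agree; but this is precisely what Lemma \ref{lem3}(ii) already settles. Once both continuity and divergence at infinity are in hand, the lemma is a one-line consequence of the intermediate value theorem together with a routine induction.
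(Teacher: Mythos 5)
Your proof is correct and follows essentially the same route as the paper: both rely on the continuity of $t\mapsto d(\bgamma(t),\bgamma(0))$ from Lemma \ref{lem3}(ii) and the divergence $d(\bgamma(t),\bgamma(0))\to\infty$ coming from Lemma \ref{lem3}(i) via the definition of an almost geodesic ray, and then invoke the intermediate value theorem. The only (cosmetic) difference is how monotonicity is secured: the paper takes $t^n:=\inf\{t\geq 0\colon d(\bgamma(t),\bgamma(0))=\beta^n\}$, the first hitting time, while you obtain $t^n>t^{n-1}$ by applying the intermediate value theorem inductively on $[t^{n-1},T]$; both work equally well.
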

\begin{proof}
Note that as $\overline{\gamma}\colon [0,\infty)\to M$ is an almost geodesic by Lemma \ref{lem3}(i), we know that $d(\overline{\gamma}(t),\overline{\gamma}(0))\to\infty$ as $t\to\infty$.  From Lemma \ref{lem3}(ii) we know that $\overline{\gamma}$ is continuous on $[0,\infty)$, so there exists $t_0^n\geq 0$ such that  $d(\overline{\gamma}(t_0^n),\overline{\gamma}(0)) =\beta^n$.  Now if we let  $t^n:=\inf\{t\geq 0\colon d(\overline{\gamma}(t),\overline{\gamma}(0)) =\beta^n\}$, then by continuity of $\overline{\gamma}$ we have that $d(\overline{\gamma}(t^n),\overline{\gamma}(0)) =\beta^n$ and $t^{n+1}>t^n$ for all $n\geq 0$. 
\end{proof}
\subsection{Detour distance}
Suppose that $(M,d)$ is a proper geodesic metric space. Given two Busemann points $h_1,h_2\in \partial \overline{M}^h$  the {\em detour cost} is given by  
\begin{equation}\label{H}
H(h_1,h_2) := \inf_{(z^n)}\lim\inf_n d(b,z^n)  +h_2(z^n),
\end{equation}
where the infimum is taken over all sequences $(z^n)$ such that $h_{z^n}$ converges to $h_1$. It is known, see \cite[Lemma 3.1]{LW} that if $(z^n)$ is an almost geodesic  sequence converging to $h_1$ and $(w^m)$ converges to $h_2$, then 
\[
H(h_1,h_2) = \lim_{n\to\infty}\left(d(b,z^n) +\lim_{m\to\infty} d(z^n,w^m) -d(b,w^m)\right) =   \lim_{n\to\infty} d(b,z^n)  +h_2(z^n).
\]
The {\em detour distance} is given by 
\[
\delta(h_1,h_2) := H(h_1,h_2)+H(h_2,h_1).
\]
Note that for all $m,n\geq 0$ we have that 
\[
d(b,z^n) + d(z^n,w^m) -d(b,w^m)\geq 0,
\]
so that $H(h_1,h_2)\geq 0$ for all $h_1,h_2\in\partial \overline{M}^h$. It is, however, possible for  $H(h_1,h_2)$ to be infinite.  It can be shown, see \cite[Section 3]{LW}  or \cite[Section 2]{Wa3} that the detour distance is independenst of the basepoint.

The detour distance was introduced in \cite{AGW} and has been exploited and further developed in \cite{LW,Wa3}. It is known, see for instance \cite[Section 3]{LW} or \cite[Section 2]{Wa3}, that  on $\mathcal{B}_M\subseteq \partial\overline{M}^h$ the  detour distance is symmetric, satisfies the triangle inequality, and $\delta(h_1,h_2) =0$ if and only  if $h_1=h_2$. This yields a partition of $\mathcal{B}_M$ into  equivalence classes, where $h_1$ and $h_2$ are said to be equivalent if  $\delta(h_1,h_2) <\infty$. The equivalence class of $h$ will be denoted by $\mathcal{P}(h)$. Thus, the set of Busemann points, $\mathcal{B}_M$, is the disjoint union of metric spaces under the detour distance, which are called {\em parts} of $\mathcal{B}_M$. 

Isometric embeddings between proper geodesic metric spaces can be extended to the parts of the metric spaces as detour distance isometries. Indeed, suppose that $\phi\colon (M,d)\to (N,\rho)$ is an {\em isometric embedding}, i.e., $\rho(\phi(x),\phi(y)) =d(x,y)$ for all $x,y\in M$. (Note that $\phi$ need not be onto.) If $h$ is a Busemann point of $(M,d)$ with basepoint $b$ and $(z^n)$ is an almost geodesic sequence such that $(h_{z^n})$ converges to $h$, then $(u^n)$, with $u^n:=\phi(z^n)$ for $n\geq 0$, is an almost geodesic sequence in $(N,\rho)$, and hence $(h_{u^n})$ converges to a Busemann point, say $\phi(h)$, of $(N,\rho)$ with basepoint $\phi(b)$.  

We note that $\phi(h)$ is independent of the almost geodesic sequence $(z^n)$. To see this let $(w^n)$ be another almost geodesic such that $(h_{w^n})$ converges to $h$. Write $v^n:=\phi(w^n)$ for $n\geq 0$ and let $\phi(h)'$ be the limit of $(h_{v^n})$. Then 
\begin{eqnarray*}
H(h,h)  & = & \lim_{n\to\infty} d(w^n,b) + \lim_{m\to\infty} d(w^n,z^m) -d(b,z^m)\\
& = & \lim_{n\to\infty} \rho(v^n,\phi(b)) + \lim_{m\to\infty} \rho(v^n,u^m) -\rho(\phi(b),u^m)\\
 & = & H(\phi(h)',\phi(h)).
\end{eqnarray*}
Likewise, $H(\phi(h),\phi(h)') = H(h,h)$, and we deduce that $\delta(\phi(h)',\phi(h))= H(\phi(h)',\phi(h))+H(\phi(h),\phi(h)')=\delta(h,h) =0$, which shows that $\phi(h)'=\phi(h)$, as $\phi(h)'$ and $\phi(h)$ are Busemann points. 
Thus, there exists a well defined map $\Phi\colon\mathcal{B}_M\to\mathcal{B}_N$ given by $\Phi(h) :=\phi(h)$. 
\begin{lemma}\label{partiso}
If $\phi\colon (M,d)\to (N,\rho)$ is an isometric embedding, then $\Phi(\mathcal{P}(h))\subseteq \mathcal{P}(\phi(h))$ for all Busemann points $h$ of $(M,d)$ and 
\[
\delta(h',h) = \delta(\Phi(h'),\Phi(h))\mbox{\quad for all } h,h'\in\mathcal{B}_M. 
\]
\end{lemma}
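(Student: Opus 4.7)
The plan is to reduce both statements to a single identity: for all Busemann points $h,h'\in\mathcal{B}_M$,
\[
H(\Phi(h'),\Phi(h)) = H(h',h).
\]
Once this is established, interchanging $h$ and $h'$ and adding gives $\delta(\Phi(h'),\Phi(h))=\delta(h',h)$, which is the second assertion. The first assertion then follows immediately, since $h'\in\mathcal{P}(h)$ means $\delta(h',h)<\infty$, hence $\delta(\Phi(h'),\Phi(h))<\infty$, i.e.\ $\Phi(h')\in\mathcal{P}(\Phi(h))=\mathcal{P}(\phi(h))$.

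To prove the identity I would pick almost geodesic sequences $(w^n)$ and $(z^m)$ in $(M,d)$ with basepoint $b$ such that $h_{w^n}\to h'$ and $h_{z^m}\to h$. The discussion preceding the lemma already shows that the images $v^n:=\phi(w^n)$ and $u^m:=\phi(z^m)$ are almost geodesic sequences in $(N,\rho)$ with basepoint $\phi(b)$, and that $h_{v^n}\to\Phi(h')$ and $h_{u^m}\to\Phi(h)$ in $\overline{N}^h$. Applying the formula from \cite[Lemma 3.1]{LW} first in $(N,\rho)$ and then, using $\rho(\phi(x),\phi(y))=d(x,y)$, translating every distance back into $(M,d)$, one obtains
\[
H(\Phi(h'),\Phi(h)) \;=\; \lim_{n\to\infty}\rho(\phi(b),v^n) + \lim_{m\to\infty}\bigl(\rho(v^n,u^m)-\rho(\phi(b),u^m)\bigr),
\]
which equals
\[
\lim_{n\to\infty} d(b,w^n) + \lim_{m\to\infty}\bigl(d(w^n,z^m)-d(b,z^m)\bigr) \;=\; H(h',h),
\]
by another application of \cite[Lemma 3.1]{LW} in $(M,d)$.

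There is essentially no obstacle here; the proof is a direct computation. The only subtleties worth flagging are: (i) one must verify that the formula from \cite[Lemma 3.1]{LW} is applicable on both sides, which requires the first sequence to be almost geodesic and the second to converge to the target horofunction, and both properties transfer through $\phi$ by the isometric embedding property; and (ii) we tacitly use that the detour distance does not depend on the basepoint, as noted in the paragraph defining $\delta$, so computing $H(\Phi(h'),\Phi(h))$ with basepoint $\phi(b)$ is legitimate.
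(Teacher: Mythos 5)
Your proposal is correct and takes essentially the same route as the paper: the paper likewise picks almost geodesic sequences $(z^n)$, $(w^n)$ converging to $h$ and $h'$, applies the formula from \cite[Lemma 3.1]{LW} in both $(M,d)$ and $(N,\rho)$, and uses $\rho(\phi(x),\phi(y))=d(x,y)$ to identify $H(h',h)$ with $H(\Phi(h'),\Phi(h))$, deducing the part inclusion from finiteness of the detour distance. Your two flagged subtleties (applicability of the Lemma 3.1 formula to the image sequences, and basepoint independence of $\delta$) are exactly the points the paper relies on via its preceding discussion.
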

\begin{proof} 
Let $(z^n)$ and $(w^n)$ be  almost geodesic sequences such that $(h_{z^n})$ converges to $h$ and $(h_{w^n})$ converges to $h'$  in $(M,d)$ with basepoint $b$. Then 
\begin{eqnarray*}
H(h',h)  & = & \lim_{n\to\infty} d(w^n,b) + \lim_{m\to\infty} d(w^n,z^m) -d(b,z^m)\\
& = & \lim_{n\to\infty} \rho(v^n,\phi(b)) + \lim_{m\to\infty} \rho(v^n,u^m) -\rho(\phi(b),u^m)\\
 & = & H(\phi(h)',\phi(h)).
\end{eqnarray*}
Likewise, $H(h,h') = H(\phi(h),\phi(h)')$, so that $\delta(h',h) = \delta(\Phi(h'),\Phi(h))$, which completes the proof. 
\end{proof}

It could happen that all parts consist of a single Busemann point, but there are also natural instances where there are nontrivial parts. In case of products of metric spaces coming from proper geodesic metric spaces, it turns out that the parts and the detour distance have a special structure that is linked to  the quotient space, $(\mathbb{R}^n/\mathrm{Sp}(\mathbf{1}), \|\cdot\|_{\mathrm{var}})$ given in (\ref{var}), as shown by the following proposition. 

 \begin{proposition}\label{prop1} If, for $j=1,\ldots, p$,  $(M_j,d_j)$ is proper geodesic metric spaces with almost geodesic sequence $(y^n_j)$ and corresponding Busemann point $h_j$ with basepoint $y^0_j$, and  $J\subseteq \{1,\ldots,p\}$ is non-empty, then the following assertions hold: 
 \begin{enumerate}[(i)] 
 \item For  $\alpha\in\mathbb{R}^J$ with $\min_{j\in J}\alpha_j=0$ the function  $h\colon (\prod_{j=1}^p M_j,d_\infty)\to \mathbb{R}$ given by, 
 \begin{equation}\label{prodbus}
 h(x) =\max_{j\in J} h_j(x_j)-\alpha_j,\mbox{\quad for $x\in\prod_{j=1}^p M_j$},
 \end{equation}
 is a Busemann point  with basepoint $y^0=(y^0_1,\ldots,y^0_p)$. Moreover, there exists an almost geodesic sequence $(z^n)$ converging to $h$, where $(z^n_j)$ is an almost geodesic converging to $h_j$ for $j\in J$ such  that  for all $n\geq 1$ we have that $d_\infty(z^n,y^0) -d_j(z^n_j,y^0_j) =\alpha_j$ for $j\in J$, and  $d_i(z^n_i,y^0_i) =0$ for all $i\not\in J$.    
\item If $\beta\in\mathbb{R}^J$ with $\min_{j\in J}\beta_j=0$ and $h'$ is a Busemann point  with basepoint $y^0$ of the form,
 \[
 h'(x) =\max_{j\in J} h_j(x_j)-\beta_j,\mbox{\quad for $x\in\prod_{j=1}^p M_j$},
 \]
then $\delta(h,h') = \|\alpha -\beta\|_{\mathrm{var}}$.  
\item For $h$ as in (\ref{prodbus}) the part $(\mathcal{P}(h),\delta)$ contains an isometric copy of $(\mathbb{R}^J/\mathrm{Sp}(\mathbf{1}), \|\cdot\|_{\mathrm{var}})$.
\end{enumerate} 
\end{proposition}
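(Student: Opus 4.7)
The plan is to construct the almost geodesic sequence in (i) by hand from almost geodesic rays in each factor, then deduce the detour identity in (ii) by plugging that sequence into the explicit formula $H(h,h')=\lim_n d(b,z^n)+h'(z^n)$, and finally read off (iii) as an immediate consequence.

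For (i), for each $j\in J$ use Lemma \ref{lem3} to produce an almost geodesic ray $\bar{\gamma}_j$ induced by $(y^n_j)$ with $h_{\bar{\gamma}_j(t)}\to h_j$ and $t\mapsto d_j(\bar{\gamma}_j(t),y^0_j)$ continuous on $[0,\infty)$. Fix a strictly increasing sequence $\beta^n\to\infty$ in $[\max_{j\in J}\alpha_j,\infty)$, and apply Lemma \ref{lem4} on each factor to select strictly increasing times $t^n_j$ with $d_j(\bar{\gamma}_j(t^n_j),y^0_j)=\beta^n-\alpha_j$. Define $z^n_j:=\bar{\gamma}_j(t^n_j)$ for $j\in J$ and $z^n_i:=y^0_i$ for $i\notin J$, taking $z^0=y^0$ if convenient. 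By construction $d_j(z^n_j,y^0_j)=\beta^n-\alpha_j$ for $j\in J$, $d_i(z^n_i,y^0_i)=0$ for $i\notin J$, and $d_\infty(z^n,y^0)=\beta^n$, so the identities in (i) hold. The key verification is the product-space almost geodesic property: for $j\in J$, the $\epsilon$-defect of the ray $\bar{\gamma}_j$ gives $d_j(z^m_j,z^k_j)\le\beta^m-\beta^k+\epsilon$ for $m\ge k$ sufficiently large (uniformly in $j$, as $J$ is finite), while $d_i(z^m_i,z^k_i)=0$ for $i\notin J$; together with $d_\infty(z^n,y^0)=\beta^n$ this bounds the product defect by $\epsilon$. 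Pointwise convergence $h_{z^n}\to h$ then follows from $d_j(x_j,z^n_j)-\beta^n=h_{z^n_j}(x_j)-\alpha_j\to h_j(x_j)-\alpha_j$ for $j\in J$ and $d_i(x_i,y^0_i)-\beta^n\to-\infty$ for $i\notin J$, so the sup defining $h_{z^n}(x)$ eventually localises to $J$ and converges to $h(x)$.

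For (ii), apply $H(h,h')=\lim_n d_\infty(y^0,z^n)+h'(z^n)$ to the sequence just constructed. Each $(z^n_j)$ with $j\in J$ is an almost geodesic converging to the Busemann point $h_j$, so $\delta(h_j,h_j)=0$ and hence $H(h_j,h_j)=0$ forces $d_j(y^0_j,z^n_j)+h_j(z^n_j)\to 0$, i.e., $h_j(z^n_j)=-(\beta^n-\alpha_j)+o(1)$. Substituting,
\begin{equation*}
\beta^n+h'(z^n)=\beta^n+\max_{j\in J}\bigl(h_j(z^n_j)-\beta_j\bigr)\longrightarrow \max_{j\in J}(\alpha_j-\beta_j),
\end{equation*}
so $H(h,h')=\max_{j\in J}(\alpha_j-\beta_j)$. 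Swapping the roles of $h$ and $h'$ (and hence of $\alpha,\beta$) yields $H(h',h)=\max_{j\in J}(\beta_j-\alpha_j)$, and summing gives $\delta(h,h')=\max_{j\in J}(\alpha_j-\beta_j)+\max_{j\in J}(\beta_j-\alpha_j)=\|\alpha-\beta\|_{\mathrm{var}}$.

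For (iii), send $\overline{\alpha}\in\mathbb{R}^J/\mathrm{Sp}(\mathbf{1})$ to the Busemann point produced by (i) from its unique representative with $\min_{j\in J}\alpha_j=0$. Part (ii) shows the detour distance to $h$ is always finite, so the image lies in $\mathcal{P}(h)$, and (ii) also shows the map is isometric into $(\mathcal{P}(h),\delta)$. The main obstacle is the bookkeeping in (i): one must keep the single-factor almost geodesic $\epsilon$-estimates from $\bar{\gamma}_j$ tight enough to survive the max in $d_\infty$, and simultaneously ensure that Lemma \ref{lem4} hits the exact prescribed distances $\beta^n-\alpha_j$ so that the $\alpha_j$-offsets are preserved throughout the sequence. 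Once (i) is secured, the rest is routine manipulation of the detour-cost formula together with the identity $\|\cdot\|_{\mathrm{var}}=\max(\cdot)+\max(-\cdot)$.
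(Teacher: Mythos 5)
Your proof is correct and takes essentially the same approach as the paper's: it builds the product almost geodesic sequence by sampling the rays induced by $(y^n_j)$ via Lemmas \ref{lem3} and \ref{lem4} at distances $\beta^n-\alpha_j$, computes the detour cost from the formula $H(h_1,h_2)=\lim_n d(b,z^n)+h_2(z^n)$ together with $H(h_j,h_j)=0$, and identifies the normalised section $\{\alpha\in\mathbb{R}^J\colon \min_{j\in J}\alpha_j=0\}$ with $(\mathbb{R}^J/\mathrm{Sp}(\mathbf{1}),\|\cdot\|_{\mathrm{var}})$ for (iii). The only cosmetic differences are that the paper fixes $\beta^n=\max_{i\in J}d_i(y^n_i,y^0_i)$ rather than an arbitrary strictly increasing sequence tending to infinity, and evaluates $H(h',h)$ on a sequence converging to $h'$ rather than $H(h,h')$ on one converging to $h$; neither changes the argument.
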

 \begin{proof}
 We know there exists an almost geodesic sequence $(y^n_j)$ in $(M_j,d_j)$ such that $h_{y^n_j}\to h_j$ as $n\to\infty$. for each $j\in J$. As $d_j(y^n_j,b_j)\to\infty$, we can take a subsequence and assume that $d_j(y^{n+1}_j,y^0_j)>d_j(y^n_j,y^0_j)>\alpha_j$ for all $n\geq 1$. Let $\bgamma_j$  be a ray induced by $(y^n_j)$. 
 
For $j\in J$ we get from  Lemma \ref{lem4}   a sequence $(t^n_j)$ in $[0,\infty)$ with $t^0_j=0$ and  
\[
d_j(\gamma_j(t^n_j),y^0_j) = (\max_{i\in J}d_i(y^n_i,y^0_i))-\alpha_j\geq 0\mbox{\quad for all }n\geq 1.
\]

Let $z^0:=(y^0_1,\ldots,y^0_p)$ and for $n\geq 1$ define $z^n=(z^n_1,\ldots,z^n_p)\in \prod_{j=1}^p M_j$ by $z^n_j:=\bgamma_j(t^n_j)$ if $j\in J$, and $z^n_j :=y^0_j$ otherwise. 

As $\min_{j\in J}\alpha_j=0$, we have for all $j\in J$, we get by construction that 
\[
d_\infty(z^n,z^0) = \max_{i\in J}d_i(y^n_i,y^0_i) = d_j(z^n_j,z^0_j) +\alpha_j\mbox{\quad for all }n\geq 1.
\]
Moreover, it follows from Lemma \ref{lem3} that $(z^n_j)$ is an almost geodesic converging to $h_j$ for $j\in J$. 

We claim that $(z^n)$ is an almost geodesic sequence in $(\prod_{j=1}^p M_j,d_\infty)$. 
Indeed, note that for $n\geq k\geq 0$ we have that 
\[
d_\infty(z^n,z^k) +d_\infty(z^k,z^0) - d_\infty(z^n,z^0) = d_j(z^n_j,z^k_j) + d_\infty(z^k,z^0) - d_\infty(z^n,z^0)
\]
for some $j=j(n,k)\in J$, as $d_j(z^n_j,z_j^k)=0$ for all $j\not\in J$. 
As $J$ is finite, we find  for all $n\geq k$ large that 
\[
d_\infty(z^n,z^k) +d_\infty(z^k,z^0) - d_\infty(z^n,z^0) =  d_j(z^n_j,z^k_j) + d_j(z^k_j,z^0_j) +\alpha_j-  d_j(z^n_j,z^0_j) -\alpha_j< \epsilon.
\]
Also for $n\geq 0$ large and $x\in \prod_{j=1}^p M_j$ we have that 
\[
h_{z^n}(x) = \max_{j\in J}( d_j(x_j,z^n_j) - d_\infty(z^n,z^0))  =  \max_{j\in J}( d_j(x_j,z^n_j) - d_j(z^n_j,z^0_j) -\alpha_j). 
\]
Letting $n\to\infty$ gives 
\[
h(x) =  \max_{j\in J}h_j(x_j) -\alpha_j\mbox{\quad for all $x\in\prod_{j=1}^p M_j$}
\]
and shows that $h$ is a Busemann point with basepoint $y^0=(y^0_1,\ldots,y^0_p)$. This completes the proof of assertion (i).

To prove the second assertion  we know from the first assertion that there exists  an almost geodesic sequence $(w^n)$ converging to $h'$, where  $(w^n_j)$  is an almost geodesic converging to $h_j$  and $d_\infty(w^n,y^0)-d_j(w_n^n,y^0_j)=\beta_j$ for $j\in J$. So, we get that 
\begin{eqnarray*}
 \max_{j\in J}\,(\beta_j-\alpha_j)& =&  \max_{j\in J}\,(H(h_j,h_j) +\beta_j-\alpha_j ) \\
& = &\max_{j\in J}\,( \lim_{n\to\infty} d_j(w^n_j,y^0_j) +\beta_j+h_j(w^n_j) -\alpha_j)\\
& = &  \max_{j\in J}\,(\lim_{n\to\infty} d_\infty(w^n,y^0) +h_j(w^n_j) -\alpha_j)\\
& = &\lim_{n\to\infty}\max_{j\in J}(d_\infty(w^n,y^0) + h_j(w^n_j) -\alpha_j)\\
& = & \lim_{n\to\infty} d_\infty(w^n,y^0) +h(w^n).
\end{eqnarray*}
Interchanging the roles of $h$ and $h'$, we find that 
\[
\delta(h',h) = H(h',h) + H(h,h') =  \max_{j\in J}(\beta_j-\alpha_j)+  \max_{j\in J} (\alpha_j-\beta_j) =\|\alpha -\beta\|_{\mathrm{var}}.
\]
The final assertion is a direct consequence of the previous  two, as $(S,\|\cdot\|_{\mathrm{var}})$ with $S:=\{\alpha\in\mathbb{R}^J\colon \min_{j\in J}\alpha_j =0\}$ is isometric to $(\mathbb{R}^J/\mathrm{Sp}(\mathbf{1}), \|\cdot\|_{\mathrm{var}})$.
 \end{proof}
 Proposition \ref{prop1} is related to \cite[Propositions  8.3 and 8.4]{Wa4}, where the Busemann points for the product of two metric spaces are characterised and the detour cost is determined.

It is interesting to understand when  a part $(\mathcal{P}(h),\delta)$ is isometric to  $(\mathbb{R}^J/\mathrm{Sp}(\mathbf{1}), \|\cdot\|_{\mathrm{var}})$.  
\begin{theorem}\label{thm:parts} 
 If, for $j=1,\ldots,q$,  $(N_j,\rho_j)$ is a proper geodesic metric space such that all horofunctions are Busemann points, and  $\delta(h_j,h_j')=\infty$ for all $h_j\neq h_j'$ Busemann points of $(N_j,\rho_j)$, then every horofunction $h$ of $(\prod_{j=1}^q N_j,d_\infty)$ is a Busemann point, and $(\mathcal{P}(h),\delta)$ is isometric to  $(\mathbb{R}^{J}/\mathrm{Sp}(\mathbf{1}),\|\cdot\|_\mathrm{var})$ for some $J\subseteq \{1,\ldots,q\}$.
 \end{theorem}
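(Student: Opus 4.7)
The plan is to combine Theorem \ref{thm:2.2} with Proposition \ref{prop1} and the two hypotheses on each factor $(N_j,\rho_j)$ to pin down exactly which horofunctions of the product lie in a common part: the hypotheses will force any horofunction in $\mathcal{P}(h)$ to share the same index set $J$ and the same factor-level Busemann points $h_j$ as $h$, leaving only the shift vector free. First, by Theorem \ref{thm:2.2} every horofunction $h$ of $(\prod_{j=1}^q N_j,d_\infty)$ has the form $h(x)=\max_{j\in J}(h_j(x_j)-\alpha_j)$ for some non-empty $J\subseteq\{1,\ldots,q\}$, $\alpha\in\mathbb{R}^J$ with $\min_{j\in J}\alpha_j=0$, and horofunctions $h_j$ of $N_j$. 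Under the first hypothesis each $h_j$ is a Busemann point, so Proposition \ref{prop1}(i) yields that $h$ is itself a Busemann point of the product, together with a canonical almost geodesic representative $(z^n)$ satisfying $z^n_i=y^0_i$ for $i\notin J$ and, for $j\in J$, $(z^n_j)$ an almost geodesic converging to $h_j$ with $d_\infty(y^0,z^n)-d_j(y^0_j,z^n_j)=\alpha_j$.

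Next, to describe $\mathcal{P}(h)$ I would take an arbitrary horofunction $h'(x)=\max_{i\in J'}(h'_i(x_i)-\beta_i)$ of the product and compute $H(h,h')=\lim_n(d_\infty(y^0,z^n)+h'(z^n))$ against this canonical sequence. Splitting the maximum over $J'$ into $J'\cap J$ and $J'\setminus J$ and using $d_\infty(y^0,z^n)=d_j(y^0_j,z^n_j)+\alpha_j$ for $j\in J$, I would observe that for $j\in J'\setminus J$ the fact $z^n_j=y^0_j$ makes $h'_j(z^n_j)=0$, so the contribution is $d_\infty(y^0,z^n)-\beta_j\to\infty$, whereas for $j\in J\cap J'$ the contribution tends to the shifted factor-level detour cost $H(h_j,h'_j)+\alpha_j-\beta_j$. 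Hence $J'\setminus J\neq\emptyset$ forces $H(h,h')=\infty$, and the symmetric calculation using the canonical almost geodesic for $h'$ shows that $J\setminus J'\neq\emptyset$ forces $H(h',h)=\infty$. Finiteness of $\delta(h,h')$ therefore forces $J=J'$.

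Assuming $J=J'$, the same calculation collapses to $H(h,h')=\max_{j\in J}(H(h_j,h'_j)+\alpha_j-\beta_j)$ and likewise for $H(h',h)$. If $h_j\neq h'_j$ for some $j\in J$, then by the second hypothesis $\delta(h_j,h'_j)=\infty$ in $N_j$, so at least one of $H(h_j,h'_j)$ and $H(h'_j,h_j)$ is infinite; since for the finite set $J$ the limit commutes with the maximum, the corresponding one of $H(h,h')$ or $H(h',h)$ is infinite too, whence $\delta(h,h')=\infty$. Thus $\mathcal{P}(h)$ is precisely $\{\max_{j\in J}(h_j(\cdot)-\beta_j):\beta\in\mathbb{R}^J,\,\min_{j\in J}\beta_j=0\}$, and Proposition \ref{prop1}(ii)--(iii) says the map $\beta\mapsto \max_{j\in J}(h_j(\cdot)-\beta_j)$ is an isometry from $(\mathbb{R}^J/\mathrm{Sp}(\mathbf{1}),\|\cdot\|_{\mathrm{var}})$ onto $(\mathcal{P}(h),\delta)$, which completes the argument.

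The main technical obstacle is the careful case analysis in computing $H(h,h')$: one must track precisely which coordinates contribute the dominant term in the maximum defining $h'(z^n)$, and exploit the canonical almost geodesic of Proposition \ref{prop1}(i) to legitimately exchange the limit in $n$ with a maximum over the finite index set. Once that bookkeeping is in place the two regularity hypotheses on each factor slot in cleanly to eliminate all Busemann points of the product beyond the intended family.
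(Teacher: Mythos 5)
Your proposal is correct and follows essentially the same route as the paper: Theorem \ref{thm:2.2} for the normal form, Proposition \ref{prop1}(i) for Busemann-ness together with the canonical almost geodesic, a case analysis showing $J\neq J'$ or $h_j\neq h'_j$ forces $\delta=\infty$ (your exact identity $H(h,h')=\max_{j\in J}\bigl(H(h_j,h'_j)+\alpha_j-\beta_j\bigr)$ is a sharper form of the paper's one-sided inequalities obtained from the infimum definition of $H$ and an arbitrary sequence converging to $h'$), and Proposition \ref{prop1}(ii)--(iii) for the isometry with $(\mathbb{R}^J/\mathrm{Sp}(\mathbf{1}),\|\cdot\|_{\mathrm{var}})$. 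The one detail you gloss is the basepoint change: Proposition \ref{prop1}(i) produces a Busemann point based at $y^0$ rather than $b$, so the shift vector must be recomputed (the paper's $\gamma_j$ in the passage from $h$ to $h^*$); this is routine since the detour distance is basepoint-independent, but it should be stated.
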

\begin{proof}
Let $h$ be a horofunction of $(\prod_{j=1}^q N_j,d_\infty)$ with respect to basepoint $b=(b_1,\ldots,b_q)$. By Theorem \ref{thm:2.2} we know that $h$ is of the form 
\[
h(x) = \max_{j\in J} h_j(x_j) -\alpha_j\mbox{\quad for $x\in \prod_{j=1}^q N_j$},
\]
and $h_j$ is a horofunction of $(N_j,\rho_j)$ with respect to basepoint $b_j$ for each $j\in J$.  
As each horofunction of $(N_j,\rho_j)$, is a Busemann point, there exists an almost geodesic sequence $(y^n_j)$ such that 
$(h_{y^n_j})$ converges to $h_j$ with basepoint $b_j$. 

For $j\not\in J$ let $y^0_j=b_j$ and define $y^0:=(y^0_1,\ldots,y^0_q)$.  Let $h^*_{j}$ be the Busemann point obtained by changing the basepoint of $h_j$ to  $y^0_j$, so $h^*_{j}(x_j) := h_{j}(x_j) - h_{j}(y^0_j)$. Now note that if we change the basepoint for $h$  to $y^0$, we get  the Busemann point 
\begin{eqnarray*}
h^*(x)  & := & h(x) -h(y^0)\\
  & = & \max_{j\in J} (h_j(x_j) -\alpha_j) - \max_{i\in J}\,( h_i(y^0_i) -\alpha_i)\\
  & = & \max_{j\in J}\,( h^*_{j}(x_j) +h_{j}(y^0_j) -\alpha_j - \max_{i\in J}\, (h_i(y^0_i) -\alpha_i))\\
   & = & \max_{j\in J} h^*_{j}(x_j) -\gamma_j,\\
\end{eqnarray*}  
where $\gamma_j : = \max_{i\in J} (h_i(y^0_i) -\alpha_i) -(h_{j}(y^0_j) -\alpha_j) \geq 0$ for $j\in J$ and $\min_{j\in J}\gamma_j =0$. 
It now follows from Proposition \ref{prop1}(i) that $h^*$ is a Busemann point  of $(\prod_{j=1}^q N_j,d_\infty)$ with respect to basepoint $y^0$, and hence $h$ is a Busemann point $(\prod_{j=1}^q N_j,d_\infty)$ with respect to basepoint $b$. Moreover, there exists an almost geodesic sequence $(z^m)$ converging to $h^*$, where $(z^m_j)$ is an almost geodesic converging to $h^*_j$ for $j\in J$, and for all $m\geq 1$ we have that $d_\infty(z^m,y^0) -d_j(z^m_j,y^0_j) =\gamma_j$ for $j\in J$, and  $d_i(z^m_i,y^0_i) =0$ for all $i\not\in J$.

To prove the second assertion we note that $(\mathcal{P}(h),\delta)$ is isometric to $(\mathcal{P}(h^*),\delta)$, since $\delta$ is independent of the basepoint.   Let $h'$ is a Busemann point  of $(\prod_{j=1}^q N_j,d_\infty)$ with respect to basepoint $y^0$ and $(w^n)$ be an almost geodesic converging to $h'$. Then by Theorem \ref{thm:2.2} we know $h'$ is of the form
 \begin{equation}\label{eq:h'}
 h'(x) =\max_{j\in J'} h'_j(x_j)-\beta_j,\mbox{\quad for $x\in\prod_{j=1}^q N_j$},
 \end{equation}
 and, after taking a subsequence, we may assume that $d_\infty(w^{n}_j,y^0) -d_k(w^{n}_j,y^0_j)\to\beta_j$ for all $j\in J'$,
$d_\infty(w^{n},y^0) -d_i(w^{n}_i,y^0_i)\to\infty$ for all $i\not\in J'$, and  $h_{w^{n}_j}\to h'_j\in\partial \overline{N_j}^h$ for all $j\in J'$.

 We claim that  if $J\neq J'$, or, $J=J'$ and $h_k\neq h_k'$ for some $k\in J$, then $\delta(h^*,h')=\infty$.
 Suppose that $J\neq J'$ and $k\in J$, but $k\not\in J'$. Then
\begin{eqnarray*}
\lim_{m\to\infty} d_\infty(w^n,z^m)-d_\infty(y^0,z^m) & = &\lim_{m\to\infty} d_\infty(w^n,z^m)-d_k(y^0_k,z^m_k) -\gamma_k\\ 
& \geq & \lim_{m\to\infty} d_k(w^n_k,z^m_k)-d_k(y^0_k,z^m_k) -\gamma_k\\ 
& \geq & -d_k(w^n_k,y^0_k) -\gamma_k,\
\end{eqnarray*}
so that 
\[
\lim_{n\to\infty} \left( d_\infty(w^n,y^0) +\lim_{m\to\infty} d_\infty(w^n,z^m)-d_\infty(y^0,z^m)\right) \geq  \lim_{n\to\infty} d_\infty(w^n,y^0)-d_k(w^n_k,y^0_k) -\gamma_k =\infty.
\]
Thus, $H(h',h^*)=\infty$ and  hence $\delta(h^*,h')=\infty$. The case where  $k\in J'$ and $k\not\in J$ can be shown in the same way.

Now suppose that $h^*_k\neq h_k'$ for some $k\in J\cap J'$. By assumption we know that $\delta(h^*_k,h'_k) =\infty$. 
Note that 
\begin{eqnarray*}
\lim_{n\to\infty}  d_\infty(w^n,y^0) +h^*(w^n) &= & \lim_{n\to\infty} d_\infty(w^n,y^0)+\max_{j\in J} h^*_j(w^n_j) -\gamma_j \\
&\geq & \liminf_{n\to\infty} d_k(w^n_k,y^0_k)+ h^*_k(w^n_k) -\gamma_k.\\
\end{eqnarray*}
It now follows from (\ref{H}) that $H(h',h^*) \geq H(h_k',h^*_k)-\gamma_k$.  Interchanging the roles of $h^*$ and $h'$ we also get that $H(h^*,h') \geq H(h^*_k,h_k')-\beta_k$, and hence $\delta(h^*,h')\geq \delta(h^*_k,h'_k) -(\gamma_k+\beta_k) =\infty$. 
 
 On the other hand, if $J=J'$ and $h^*_j=h_j'$ for all $j\in J$, then it follows from Proposition \ref{prop1}(ii) that $\delta(h^*,h') = \|\alpha -\beta\|_{\mathrm{var}}$. Moreover, it follows from that Proposition \ref{prop1}(i) that for each $\beta\in\mathbb{R}^J$ with $\min_{j\in J}\beta_j=0$ there exists a Busemann point in the part of $h^*$ of the form (\ref{eq:h'}), and hence $\mathcal{P}(h^*)$  consists of all  $h'$ of the form (\ref{eq:h'}), where $\min_{j\in J}\beta_j =0$. So if we let $S:=\{\beta\in \mathbb{R}^J\colon \min_{j\in J}\beta_j =0\}$, then $(\mathcal{P}(h^*),\delta)$ is isometric to $(S,\|\cdot\|_{\mathrm{var}})$, which in turn is isometric to the quotient space $(\mathbb{R}^{J}/\mathrm{Sp}(\mathbf{1}),\|\cdot\|_\mathrm{var})$.
 \end{proof}
An elementary example is the product space $(\mathbb{R}^n,d_\infty)$ where $d_\infty(x,y) =\max_j |x_j-y_j|$.  It is easy to verify that $(\mathbb{R},|\cdot|)$ with basepoint $0$ has only two horofunctions, namely $h_+\colon x\mapsto x$ and $h_-\colon x\mapsto -x$, both of which are Busemann points and $\delta(h_+,h_-)=\infty$. So, in this case we see that the horofunctions $h$ of $(\mathbb{R}^n,d_\infty)$ are all  Busemann points and of the form,
\[
h(x) = \max_{j\in J} \pm x_j -\alpha_j,
\]
for some $J\subseteq \{1,\ldots,n\}$ non-empty and $\alpha\in \mathbb{R}^J$ with $\min_{j\in J}\alpha_j =0$, where the sign is fixed for each $j\in J$, see also \cite[Theorem 5.2]{Guit}. Moreover, $(\mathcal{P}(h),\delta)$ is isometric to  $(\mathbb{R}^J/\mathrm{Sp}(\mathbf{1}),\|\cdot\|_\mathrm{var})$.

We are now in position to prove Theorem \ref{thm:main}.
\begin{proof}[Proof of Theorem \ref{thm:main}]
As each $(M_j,d_j)$ contains an almost geodesic sequence for $j=1,\ldots,p$, we know from Proposition \ref{prop1}(i) that  the function  $h$  of the form, $h(x) = \max_{j=1,\ldots,p} h_j(x_j)$,  is a Busemann point of $(\prod_{j=1}^p M_j,d_\infty)$. Moreover, it follows from the third part of the same proposition that $(\mathcal{P}(h),\delta)$ contains an isometric copy of $(\mathbb{R}^p/\mathrm{Sp}(\mathbf{1}),\|\cdot\|_\mathrm{var})$. 

Now suppose, for the sake of contradiction, that there exists an isometric embedding $\phi\colon (\prod_{j=1}^p M_j,d_\infty)\to (\prod_{j=1}^q N_j,d_\infty)$.  Then it follows from Lemma \ref{partiso} that the restriction of $\Phi$ to $\mathcal{P}(h)$ yields an isometric embedding of  $(\mathcal{P}(h),\delta)$ into $(\mathcal{P}(\Phi(h)),\delta')$, where $\delta'$ is the detour distance on $\mathcal{P}(\Phi(h))$.   It now follows from Theorem \ref{thm:parts}  that  $(\mathcal{P}(\Phi(h)),\delta')$ is isometric to  $(\mathbb{R}^n/\mathrm{Sp}(\mathbf{1}),\|\cdot\|_\mathrm{var})$ for some $n\in \{1,\ldots,q\}$.  So, $\Phi$ yields  an isometric embedding of $(\mathbb{R}^p/\mathrm{Sp}(\mathbf{1}),\|\cdot\|_\mathrm{var})$ into $(\mathbb{R}^n/\mathrm{Sp}(\mathbf{1}),\|\cdot\|_\mathrm{var})$ with $n<p$, which  contradicts Brouwer's invariance of domains theorem \cite{Brouw}. 
\end{proof}

\section{Product domains in $\mathbb{C}^n$}
Before we show how we can use Theorem \ref{thm:main} to derive Theorem \ref{thm:poincare}, we first recall some basic facts concerning the Kobayashi distance, see \cite[Chapter 4]{Ko} for more details.  
On the disc, $\Delta:=\{z\in\mathbb{C}\colon |z|<1\}$, the {\em hyperbolic distance}  is given by 
\[
\rho(z,w) := \log \frac{ 1+\left|\frac{w-z}{1-\bar{z}w}\right| }{1-\left|\frac{w-z}{1-\bar{z}w}\right| }=2\tanh^{-1}\left( 1 -\frac{(1-|w|^2)(1-|z|^2)}{|1-w\bar{z}|^2}\right)^{1/2}
\mbox{\quad for $z,w\in\Delta$.}\]

Given a convex domain $D\subseteq \mathbb{C}^n$, the {\em Kobayashi distance} is given by  
\[
k_D(z,w) :=\inf\{ \rho(\zeta,\eta)\colon \mbox{ $\exists f\colon \Delta \to D$ holomorphic with $f(\zeta)=z$ and $f(\eta)=w$}\}
\]
for all $z,w\in D$.  It was shown by Lempert \cite{Lem} that on bounded convex domains the Kobayashi distance coincides with  the {\em Caratheodory distance}, which is given by 
\[
c_D(z,w) := \sup_f \,\rho(f(z),f(w))
\mbox{\quad for all $z,w\in D$,}\]
where the $\sup$ is taken over all  holomorphic maps $f\colon D\to \Delta$.  

It is known, see \cite[Proposition 2.3.10]{Ab},  that if $D\subset \mathbb{C}^n$ is bounded convex domain, then $(D,k_D)$ is a proper metric space, whose topology coincides with the usual topology on $\mathbb{C}^n$. Moreover, $(D,k_D)$ is a geodesic metric space containing geodesics rays, see \cite[Theorem 2.6.19]{Ab} or \cite[Theorem 4.8.6]{Ko}.

In the case of the Euclidean ball $B^n:=\{(z_1,\ldots,z_n)\in\mathbb{C}^n\colon \|z\|^2<1\}$, where $\|z\|^2 = \sum_i |z_i|^2$, the Kobayashi distance has an explicit formula:
\[
k_{B^n}(z,w) = 2\tanh^{-1}\left( 1 -\frac{(1-\|w\|^2)(1-\|z\|^2)}{|1-\langle z,w\rangle |^2}\right)^{1/2}
\]
for all $z,w\in B^n$, see \cite[Chapters 2.2 and 2.3]{Ab}. 

On the other hand, on the polydisc $\Delta^n:=\{(z_1,\ldots,z_n)\in\mathbb{C}^n\colon \max_i |z_i|<1\}$ the Kobayashi distance satisfies
\[
k_{\Delta^n}(z,w) =\max_i \rho(z_i,w_i)\mbox{\quad for all $w=(w_1,\ldots,w_n), z=(z_1,\ldots,z_n)\in\Delta^n$,}
\]
by the product property, see \cite[Theorem 3.1.9]{Ko}.

To determine  the  horofunctions of $(B^n,k_{B^n})$, with basepoint $b=0$, it suffices to consider limits of sequences $(h_{w_n})$, where $w_n\to\xi \in\partial B^n$ in norm. 
As   
\[
k_{B^n}(z,w_n) =\log \frac{\left( |1 -\langle z,w_n\rangle | + ( |1 -\langle z,w_n\rangle |^2 - (1-\|z\|^2)(1-\|w_n\|^2))^{1/2}\right)^2}{(1-\|z\|^2)(1-\|w_n\|^2)},
\] 
and 
\[
k_{B^n}(0,w_n) = \log \frac{(1+\|w_n\|)^2}{1-\|w_n\|^2},
\]
it follows that
\begin{eqnarray*}
h(z) & = & \lim_{n\to\infty} k_{B^n}(z,w_n)-k_{B^n}(0,w_n)\\
 &  = & \log \frac{( |1 -\langle z,\xi\rangle | +|1 -\langle z,\xi\rangle |)^2}{(1-\|z\|^2)(1+\|\xi\|)^2} \\
 & = & \log\frac{ |1 -\langle z,\xi\rangle |^2}{1-\|z\|^2}.
\end{eqnarray*}
for all $z\in B^n$.  Thus, if we write 
\begin{equation}\label{hor}
h_\xi(z) :=  \log\frac{ |1 -\langle z,\xi\rangle |^2}{1-\|z\|^2}\mbox{\quad for all $z\in B^n$,}
\end{equation}
 then we obtaine $\partial \overline{B^n}^h =\{h_\xi\colon \xi\in\partial B^n\}$, see also \cite[Lemma 2.28]{Ab3} and  \cite[Remark 3.1]{KKR}.  Moreover,  each $h_\xi$ is a Busemann point, as it is the limit induced by the geodesic ray $t\mapsto \frac{e^t-1}{e^t+1}\xi$, for $0\leq t<\infty$.
 
 \begin{corollary}\label{detour1} If $h_\xi$ and $h_\eta$ are distinct horofunctions of $(B^n,k_{B^n})$, then $\delta(h_\xi,h_\eta)=\infty$.
 \end{corollary}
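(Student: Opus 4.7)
The plan is a direct computation of the detour cost $H(h_\xi,h_\eta)$ using the geodesic ray that realises $h_\xi$ as a Busemann point. As observed just before the corollary, $h_\xi = \lim_{t\to\infty} h_{\gamma_\xi(t)}$ along the geodesic ray $\gamma_\xi(t) := \frac{e^t-1}{e^t+1}\xi = \tanh(t/2)\,\xi$. Since $(\gamma_\xi(t_n))$ is an almost geodesic sequence converging to $h_\xi$ for any $t_n\to\infty$, the formula following (\ref{H}) gives
\[
H(h_\xi,h_\eta) \,=\, \lim_{t\to\infty}\bigl(k_{B^n}(0,\gamma_\xi(t)) + h_\eta(\gamma_\xi(t))\bigr).
\]

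First I would note that $k_{B^n}(0,\gamma_\xi(t)) = t$, since $\gamma_\xi$ is a unit-speed geodesic starting at the basepoint $0$. Next, substituting $z = \gamma_\xi(t)$ into the explicit formula (\ref{hor}) for $h_\eta$ and using $1-\|\gamma_\xi(t)\|^2 = 1-\tanh^2(t/2) = 1/\cosh^2(t/2)$, one obtains
\[
h_\eta(\gamma_\xi(t)) \,=\, \log\bigl|1-\tanh(t/2)\langle\xi,\eta\rangle\bigr|^2 + 2\log\cosh(t/2).
\]
As $t\to\infty$, the first summand converges to the finite real number $\log|1-\langle\xi,\eta\rangle|^2$, while the second summand satisfies $2\log\cosh(t/2) = t - 2\log 2 + o(1)$.

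Putting these together, $k_{B^n}(0,\gamma_\xi(t)) + h_\eta(\gamma_\xi(t)) = 2t + \log|1-\langle\xi,\eta\rangle|^2 - 2\log 2 + o(1)\to +\infty$, so $H(h_\xi,h_\eta) = +\infty$. By the symmetric calculation, interchanging the roles of $\xi$ and $\eta$, we also obtain $H(h_\eta,h_\xi) = +\infty$, and therefore $\delta(h_\xi,h_\eta) = +\infty$.

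The only place the hypothesis $h_\xi\ne h_\eta$ is used is to guarantee that $\log|1-\langle\xi,\eta\rangle|^2$ is finite, i.e.\ that $\langle\xi,\eta\rangle\ne 1$; this follows from the equality case of the Cauchy--Schwarz inequality on distinct unit vectors in $\mathbb{C}^n$. There is no serious obstacle here: the whole argument reduces to one asymptotic expansion, essentially the same one used earlier in the paper to derive formula (\ref{hor}) for $h_\xi$, only with the limiting boundary point $\xi$ replaced by $\eta$ in the horofunction being evaluated along the ray toward $\xi$.
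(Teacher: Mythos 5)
Your proof is correct and takes essentially the same approach as the paper: both substitute the explicit horofunction formula (\ref{hor}) into a detour cost and observe that $|1-\langle z,\cdot\rangle|$ stays bounded away from $0$ (since $\xi\neq\eta$ forces $\langle \xi,\eta\rangle\neq 1$) while the distance terms diverge. The only cosmetic difference is that the paper lower-bounds $H(h_\eta,h_\xi)$ along arbitrary sequences $z\to\eta$, whereas you compute $H(h_\xi,h_\eta)$ exactly along the geodesic ray $\gamma_\xi$ using the almost-geodesic formula following (\ref{H}); either one infinite detour cost already yields $\delta(h_\xi,h_\eta)=\infty$.
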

 \begin{proof}
 If $\xi\neq \eta$ in $\partial B^n$, then
 \[
 \lim_{z\to \eta} k_{B^n}(z,0) + h_\xi(z) =  \lim_{z\to \eta} \log\frac{1+\|z\|}{1-\|z\|} + \log\frac{ |1 -\langle z,\xi\rangle |^2}{1-\|z\|^2} =\infty,
 \]
 which implies that $\delta(h_\xi,h_\eta)=\infty$.
 \end{proof}
 Note that if $n=1$ we recover the well-known expression for the horofunctions of the hyperbolic distance on $\Delta$: 
\[
h_\xi(z) =  \log\frac{ |1 -z\overline{\xi}|^2}{1-|z|^2}=  \log\frac{ |\xi -z|^2}{1-|z|^2}\mbox{\quad for all $z\in \Delta$.}
\]

Combining (\ref{hor}) with Theorem \ref{thm:2.2} and Proposition  \ref{prop1} we get the following. 
\begin{corollary}\label{hor2} For  $B^{n_1}\times\cdots\times B^{n_q}$ the Kobayashi distance horofunctions  with basepoint $b=0$ are precisely the functions of the form, 
\[
h(z) = \max_{j\in J} \left(  \log\frac{ |1 -\langle z_j,\xi_j\rangle |^2}{1-\|z_j\|^2}-\alpha_j\right),
\]
where $J\subseteq \{1,\ldots,q\}$  non-empty,  $\xi_j\in\partial B^{n_j}$ for $j\in J$, and $\min_{j\in J} \alpha_{j}=0$.  
Moreover, each horofunction is a Busemann point, and $(\mathcal{P}(h),\delta)$ is isometric to $(\mathbb{R}^J/\mathrm{Sp}(\mathbf{1}),\|\cdot\|_\mathrm{var})$. 
\end{corollary}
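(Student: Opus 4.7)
The plan is to assemble the corollary from the explicit description of horofunctions of $(B^n,k_{B^n})$ obtained just above, together with the general product results of Section 2. The first task is to verify that each factor $(B^{n_j},k_{B^{n_j}})$ satisfies the hypotheses of Theorem \ref{thm:parts}. These factors are proper geodesic metric spaces (noted earlier in this section), and the computation leading to (\ref{hor}) already shows that $\partial\overline{B^{n_j}}^h=\{h_\xi\colon \xi\in\partial B^{n_j}\}$, with each $h_\xi$ a Busemann point, since it arises as the limit induced by the geodesic ray $t\mapsto\tfrac{e^t-1}{e^t+1}\xi$. Moreover, Corollary \ref{detour1} gives $\delta(h_\xi,h_\eta)=\infty$ whenever $\xi\neq\eta$ in $\partial B^{n_j}$.

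Given a horofunction $h$ of the product $B^{n_1}\times\cdots\times B^{n_q}$ with basepoint $0$, Theorem \ref{thm:2.2} then provides a non-empty $J\subseteq\{1,\ldots,q\}$, horofunctions $h_j$ of $(B^{n_j},k_{B^{n_j}})$ with basepoint $0$ for $j\in J$, and $\alpha\in\mathbb{R}^J$ with $\min_{j\in J}\alpha_j=0$ such that $h(z)=\max_{j\in J} h_j(z_j)-\alpha_j$. By (\ref{hor}), each $h_j$ must be of the form $h_{\xi_j}$ for some $\xi_j\in\partial B^{n_j}$, yielding the claimed explicit representation. Conversely, since each $h_{\xi_j}$ is a Busemann point of $(B^{n_j},k_{B^{n_j}})$, Proposition \ref{prop1}(i) shows that every function of the stated form is in fact a Busemann point of the product.

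The remaining assertions follow directly from Theorem \ref{thm:parts}: the hypotheses on the factors $(B^{n_j},k_{B^{n_j}})$ were verified in the first step, so every horofunction of the product is a Busemann point and each part $(\mathcal{P}(h),\delta)$ is isometric to $(\mathbb{R}^J/\mathrm{Sp}(\mathbf{1}),\|\cdot\|_\mathrm{var})$ for the same index set $J$ that appears in the representation of $h$. There is no genuine obstacle here; the corollary is essentially a specialisation of the general product machinery of Section 2 to the case where each factor is a complex Euclidean ball, and the real work was already carried out in deriving the formula (\ref{hor}) and establishing infinite detour distance in Corollary \ref{detour1}.
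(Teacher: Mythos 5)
Your proposal is correct and takes essentially the same route as the paper, which obtains the corollary precisely by combining the explicit formula (\ref{hor}) and Corollary \ref{detour1} with the general product machinery of Theorem \ref{thm:2.2}, Proposition \ref{prop1} and Theorem \ref{thm:parts}. You have merely written out the verification of the hypotheses that the paper leaves implicit, including the correct use of Theorem \ref{thm:parts} (rather than only Proposition \ref{prop1}(iii)) to get that each part is isometric to, and not just contains a copy of, $(\mathbb{R}^J/\mathrm{Sp}(\mathbf{1}),\|\cdot\|_{\mathrm{var}})$.
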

Corollary \ref{hor2} should be compared  with \cite[Proposition 2.4.12]{Ab}.

A similar result holds for more general product domains. We know from \cite[Theorem 2.6.45]{Ab} that for each $\xi\in\partial D$ there exists a unique geodesic ray $\gamma_\xi\colon [0,\infty)\to D$ such that  $\gamma_\xi(0)= b$ and $\lim_{t\to\infty}\gamma_\xi(t) =\xi$, if  $D\subset \mathbb{C}$ is bounded strongly convex domain with $C^3$-boundary. We will denote the corresponding Busemann point by $h_\xi\colon D\to \mathbb{R}$, so  
\[
h_\xi(z) =\lim_{t\to\infty} k_D(z,\gamma_\xi(t)) -k_D(b,\gamma_\xi(t)).
\] 

\begin{lemma}\label{detour2} If $D\subset \mathbb{C}^n$ is a bounded strongly convex domain with $C^3$-boundary,  then each horofunction of $(D,k_D)$ is a Busemann point  and of the form $h_\xi$ for some $\xi\in\partial D$. Moreover,  $\delta(h_\xi,h_\eta) =\infty$ if  $\xi\neq \eta$. 
If $D = \prod_{i=1}^r D_i$, where each $D_i$  is a bounded strongly convex domain with $C^3$-boundary, then the horofunctions $h$ of $(D,k_D)$  are Busemann points and precisely the functions of the form, 
\[
h(z) = \max_{j\in J}  h_{\xi_j}(z_j)-\alpha_j,
\]
where $J\subseteq \{1,\ldots,r\}$  nonempty,  $\xi_j\in\partial D_j$ for $j\in J$, and $\min_{j\in J} \alpha_{j}=0$. 
\end{lemma}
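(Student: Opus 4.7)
My plan is to first establish the statement for a single bounded strongly convex $C^3$-domain $D$, and then deduce the product case from Theorem~\ref{thm:2.2} together with the machinery of Proposition~\ref{prop1} and Theorem~\ref{thm:parts}. For a single $D$, let $h$ be a horofunction, so by Lemma~\ref{lem1} there exists a sequence $(y^n)$ in $D$ with $k_D(y^n,b)\to\infty$ and $h_{y^n}\to h$. Since $D$ is bounded and the Kobayashi topology agrees with the Euclidean one on $D$, the divergence $k_D(y^n,b)\to\infty$ forces any subsequential Euclidean limit to lie on $\partial D$; after extracting a subsequence I may assume $y^n\to\xi\in\partial D$. The classical result of Abate on the coincidence of small and large horospheres for strongly convex $C^3$-domains (see \cite{Ab}) then ensures that the pointwise limit of $h_{y^n}$ depends only on $\xi$ and coincides with the function $h_\xi$ attached to the geodesic ray $\gamma_\xi$. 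In particular $h=h_\xi$ is a Busemann point, since the integer samples of $\gamma_\xi$ form an almost geodesic sequence.

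To show $\delta(h_\xi,h_\eta)=\infty$ for distinct $\xi,\eta\in\partial D$, I use the almost geodesic sequence $z^n:=\gamma_\eta(n)$ converging to $h_\eta$ together with the formula recalled after (\ref{H}):
\[
H(h_\eta,h_\xi)=\lim_{n\to\infty}\bigl(k_D(b,z^n)+h_\xi(z^n)\bigr)=\lim_{n\to\infty}\bigl(n+h_\xi(\gamma_\eta(n))\bigr).
\]
The divergence of the right-hand side will follow from the standard boundary asymptotics of $k_D$ on strongly convex $C^3$-domains, which imply $h_\xi(z)\to+\infty$ whenever $z\to\eta$ along a non-tangential sequence with $\eta\neq\xi$; equivalently, one may invoke Gromov hyperbolicity of such domains (Balogh--Bonk) combined with the general principle that in a Gromov hyperbolic proper geodesic metric space distinct Busemann points sit at infinite detour distance. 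Symmetry then yields $\delta(h_\xi,h_\eta)=H(h_\xi,h_\eta)+H(h_\eta,h_\xi)=\infty$.

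For the product $D=\prod_{i=1}^r D_i$, the product property of the Kobayashi distance identifies $(D,k_D)$ with the product metric space $(\prod_{i=1}^r D_i,d_\infty)$. Theorem~\ref{thm:2.2} then forces any horofunction $h$ of $(D,k_D)$ to have the form $h(z)=\max_{j\in J}h_j(z_j)-\alpha_j$ for some non-empty $J\subseteq\{1,\ldots,r\}$, horofunctions $h_j$ of $(D_j,k_{D_j})$, and $\alpha\in\mathbb{R}^J$ with $\min_{j\in J}\alpha_j=0$. By the single-domain part each $h_j=h_{\xi_j}$ for a unique $\xi_j\in\partial D_j$, yielding the claimed expression; conversely, every such expression is a Busemann point by Proposition~\ref{prop1}(i). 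Theorem~\ref{thm:parts}, whose hypotheses are supplied precisely by the single-domain case, additionally confirms that every horofunction of $(D,k_D)$ is a Busemann point.

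The principal obstacle is the single-domain portion: both the classification of horofunctions as $h_\xi$ and the infinite-detour property rely on non-trivial complex-analytic input about the boundary behaviour of $k_D$ on bounded strongly convex $C^3$-domains. Once those ingredients are established, the product case reduces to a direct application of Theorem~\ref{thm:2.2}, Proposition~\ref{prop1}, and Theorem~\ref{thm:parts}.
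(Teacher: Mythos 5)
Your main line is essentially the paper's own argument: for a single domain, the classification $h=h_\xi$ via Abate's coincidence of small and large horospheres; for the detour distance, the evaluation of $H(h_\eta,h_\xi)$ along the ray $\gamma_\eta$; and for the product, the reduction to Theorem~\ref{thm:2.2}, Proposition~\ref{prop1} and Theorem~\ref{thm:parts}. Your first justification of the divergence --- that $h_\xi(z)\to+\infty$ as $z\to\eta\neq\xi$ --- is correct and is exactly what the paper extracts from the horosphere results: by \cite[Theorem 1.7]{Ab2}, $\partial D\cap\mathrm{cl}(\mathcal{F}(\xi,R))=\{\xi\}$ for all $R>0$, so $z$ near $\eta$ lies outside every horoball of $h_\xi$, whence $h_\xi(z)\to+\infty$ (no non-tangentiality is needed; in fact the paper gets by with less, namely $h_\eta\geq 0$ outside $\mathrm{cl}(\mathcal{H}(h_\eta,0))$, together with \cite[Lemma 5]{AR} for the injectivity $\xi\mapsto h_\xi$, which in your version instead falls out of $\delta(h_\xi,h_\eta)=\infty$). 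So, modulo making the citation of these specific results explicit --- this is where all the complex-analytic content sits, and ``standard boundary asymptotics'' should be pinned to \cite[Theorem 2.6.47]{Ab} and \cite[Theorem 1.7]{Ab2} --- your proof goes through and matches the paper's.

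One clause, however, is genuinely false and must be deleted: the ``general principle that in a Gromov hyperbolic proper geodesic metric space distinct Busemann points sit at infinite detour distance.'' Gromov hyperbolicity is a quasi-isometry invariant, while the horofunction boundary and the detour distance are not, so no such coarse principle can hold; Balogh--Bonk \cite{BB} is not a substitute for the horosphere estimates. Concretely, take the strip $M=\mathbb{R}\times[0,1]$ with the $\ell_1$-metric $d((x_1,y_1),(x_2,y_2))=|x_1-x_2|+|y_1-y_2|$ and basepoint $b=(0,0)$. This is a proper geodesic space quasi-isometric to $\mathbb{R}$, hence Gromov hyperbolic. For each $y_*\in[0,1]$ the geodesic sequence $z^n=(n,y_*)$ yields the Busemann point $h_{y_*}(x,y)=-x+|y-y_*|-y_*$, and these are pairwise distinct; yet $H(h_{y_*},h_{y_{**}})=\lim_n d(b,z^n)+h_{y_{**}}(z^n)=(y_*-y_{**})+|y_*-y_{**}|$, so $\delta(h_{y_*},h_{y_{**}})=2|y_*-y_{**}|<\infty$ and all these Busemann points lie in a single nontrivial part. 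Since you offered this hyperbolicity argument only as an ``equivalently'' alongside the correct asymptotic one, excising it leaves a sound proof; but as written the alternative route would fail.
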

 \begin{proof}
 Let $h\neq h'$ be horofunctions of $(D,k_D)$.  As $(D,k_D)$ is a proper geodesic metric space, we know there exist sequences $(w_n)$ and $(z_n)$ in $D$ such that $h_{w_n}\to h$ and $h_{z_n}\to h'$. After taking  subsequences we may assume that $w_n\to \xi\in\partial D$ and $z_n\to \eta\in\partial D$, since $D$ has a compact norm closure and $h$ and $h'$ are horofunctions.  
 
We claim that $\xi\neq \eta$. To prove this we need the assumption that $D\subset \mathbb{C}$ is a bounded strongly convex domain with $C^3$-boundary and use results by Abate \cite{Ab2} concerning the so-called small and large horospheres. These are defined as follows:  for $R>0$ the {\em small horosphere} with center $\zeta\in\partial D$ (and basepoint $b\in D$) is given by 
 \[
 \mathcal{E}(\zeta,R):=\left \{x\in D\colon \limsup_{z\to \zeta} k_D(x,z)-k_D(b,z)<\frac{1}{2}\log R\right\}
 \]  
 and the {\em large horosphere} with  center $\zeta\in\partial D$ (and basepoint $b\in D$) is given by 
 \[
 \mathcal{F}(\zeta,R):=\left \{x\in D\colon \liminf_{z\to \zeta} k_D(x,z)-k_D(b,z)<\frac{1}{2}\log R\right\}. 
 \]  
 
 We note that the horoballs
 \[\mathcal{H}(h,\frac{1}{2}\log R)=\left\{x\in D\colon  \lim_{n\to\infty} k_D(x,w_n)-k_D(b,w_n)<\frac{1}{2}\log R\right\}\]
and 
\[\mathcal{H}(h',\frac{1}{2}\log R)=\left\{x\in D\colon  \lim_{n\to\infty}k_D(x,z_n)-k_D(b,z_n)<\frac{1}{2}\log R\right\}\]
 satisfy 
 \[\mathcal{E}(\xi,R)\subseteq \mathcal{H}(h,\frac{1}{2}\log R)\subseteq  \mathcal{F}(\xi,R)\mbox{\quad and \quad }
  \mathcal{E}(\eta,R)\subseteq  \mathcal{H}(h',\frac{1}{2}\log R)\subseteq  \mathcal{F}(\eta,R).
 \]
 
 It follows from \cite[Theorem 2.6.47]{Ab} (see also \cite{Ab2}) that $ \mathcal{E}(\xi,R)= \mathcal{H}(h,\frac{1}{2}\log R)=  \mathcal{F}(\xi,R)$ and $ \mathcal{E}(\eta,R)=  \mathcal{H}(h',\frac{1}{2}\log R)=  \mathcal{F}(\eta,R)$, as $D$ is strongly convex and has $C^3$-boundary.  Thus, if $\xi=\eta$, then $h=h'$, since the horoballs, $ \mathcal{H}(h,r)$ and $ \mathcal{H}(h',r)$ for $r\in\mathbb{R}$, completely determine the horofunctions. This shows that  $\xi\neq \eta$. It follows that each horofunction is of the form $h_\xi$, with $\xi\in\partial D$, and hence a Busemann point.
 
Now suppose that $h_1$ and $h_2$ are horofunctions, with $(z^1_n)$ converging to $h_1$ and $(z^2_n)$ converging to $h_2$. After taking a subsequence we may assume that $z^1_n\to \xi_1\in\partial D$ and $z^2_n\to \xi_2\in\partial D$.  We now show that if $\xi_1\neq \xi_2$, then $h_1\neq h_2$.  This implies that  there is a one-to-one correspondence between the horofunctions of $(D,k_D)$ and $\xi\in \partial D$.

To prove  this we note that as $D$ is strongly convex, $D$ is strictly convex, i.e., for each $\nu\neq \mu$ in $\partial D$ the open straight line segment $(\nu,\mu)\subset D$.  By \cite[Theorem 1.7]{Ab2} we know that $\partial D\cap \mathrm{cl}( \mathcal{F}(h_1,R)) =\{\xi_1\}$  and $\partial D\cap \mathrm{cl}( \mathcal{F}(h_2,R)) =\{\xi_2\}$ for all $R>0$. This implies that  $\partial D\cap \mathrm{cl}( \mathcal{H}(h_1,r)) =\{\xi_1\}$  and $\partial D\cap \mathrm{cl}( \mathcal{H}(h_2,r)) =\{\xi_2\}$ for all $r\in\mathbb{R}$.  Moreover, from \cite[Lemma 5]{AR}  we know that the straight-line segment $[b,\xi_1]\subset  \mathrm{cl}( \mathcal{H}(h_1,0))$.  There  exists a neighbourhood $W\subset\mathbb{C}^n$ of $\xi_1$  such that $W\cap \mathrm{cl}( \mathcal{H}(h_2,0))=\emptyset$. If we let $w\in [b,\xi_1)\cap W$, then $h_1(w)\leq 0$, but $h_2(w)>0$, and hence $h_1\neq h_2$.

Now suppose that $\xi\neq \eta$ in $\partial D$. We know that $\partial D\cap \mathrm{cl}( \mathcal{H}(h_\eta,0)) =\{\eta\}$ and $\gamma_\xi(t)\not\in  \mathrm{cl}( \mathcal{H}(h_\eta,0))$ for all $t>0$ large. So, 
\[
H(h_\xi,h_\eta) = \lim_{t\to\infty} k_D(\gamma_\xi(t),b) + h_\eta(\gamma_\xi(t))\geq \liminf_{t\to\infty} k_D(\gamma_\xi(t),b) =\infty,
\]
since $h_\eta(\gamma_\xi(t))\geq 0$ for all $t$ large. This implies that $\delta(h_\xi,h_\eta) =\infty$. 

The final part follows directly from Theorem \ref{thm:2.2} and Proposition  \ref{prop1}. 
 \end{proof}
The proof of Theorem \ref{thm:poincare} is now elementary. 
\begin{proof}[Proof of Theorem \ref{thm:poincare}] 
If $X_j\subset \mathbb{C}^{m_j}$ is a bounded convex domain, then $(X_j,k_{X_j})$ is proper geodesic metric space which contains a geodesic ray by  \cite[Theorem 2.6.19]{Ab}. Moveover, if $Y_j\subset \mathbb{C}^{n_j}$ is a bounded strongly convex domain with $C^3$-boundary, then by Lemma \ref{detour2} all the horofunctions of $(Y_j,k_{Y_j})$ are Busemann points and any two distinct Busemann points have infinite detour distance. So, Theorem \ref{thm:main} applies and gives the desired result. 
\end{proof}
\begin{remark} 
I am grateful to  Andrew Zimmer for  sharing the following observations with me. In the case where $q=1$, Theorem \ref{thm:poincare} can be strengthened and shown in a variety of other ways. Indeed, it was shown by Balogh and Bonk \cite{BB} that the Kobayashi distance is Gromov hyperbolic on a strongly pseudo-convex domains with $C^2$-boundary, but the Kobayashi distance on a product domain is clearly not Gromov hyperbolic. This immediately implies  Theorem \ref{thm:poincare} for $q=1$  in the more general case where the image domain is strongly pseudo-convex and has $C^2$-boundary.

In fact, if $q=1$ there exists a further strengthening of Theorem \ref{thm:poincare} which only requires the image domain to be strictly convex  by using  a local argument. The isometric embedding is  a locally Lipschitz map with respect to the Euclidean norm, and hence differentiable almost everywhere by Rademacher's theorem. This implies that the embedding is also an isometric embedding under the Kobayashi infinitesimal metric. On strictly convex domains, the unit balls in the tangent spaces are strictly convex and in product domains they are not, which yields a contradiction.

Finally, for holomorphic isometric embeddings and  $q=1$, Theorem \ref{thm:poincare}  can be extended to the case where the image domain is convex with 
$C^{1,\alpha}$-boundary, see \cite[Theorem 2.22]{Zi}.
\end{remark} 

Looking at the conditions required in Theorem \ref{thm:main}, it  seems  likely  the regularity conditions on the domains $Y_j$ in Theorem \ref{thm:poincare} can be relaxed considerably. In particular one may speculate that  it suffices to assume that  each domain $Y_j$ is strictly convex and has  a $C^1$-boundary.
 
\footnotesize

\end{document}